\documentclass[12pt,oneside]{amsart}

\usepackage{amssymb,latexsym}

\usepackage{hyperref}
\hypersetup{
  colorlinks   = true, 
  urlcolor     = blue, 
  linkcolor    = blue, 
  citecolor   = red 
}
\usepackage{anysize}
\marginsize{2cm}{2cm}{2cm}{2cm}

\usepackage[pdftex]{graphicx}
\usepackage{subcaption}
\usepackage{mathtools}

\usepackage{pdftexcmds}
\usepackage{catchfile}


\newtheorem{theorem}{Theorem}[section]

\newtheorem{lemma}[theorem]{Lemma}

\newtheorem{corollary}[theorem]{Corollary}

\theoremstyle{definition}
\newtheorem{definition}[theorem]{Definition}

\theoremstyle{remark}
\newtheorem{remark}[theorem]{Remark}
\newtheorem{question}[theorem]{Question}

\numberwithin{equation}{section}

\DeclareMathOperator{\id}{id}
\DeclareMathOperator{\SE}{SE}

\DeclareMathOperator{\length}{length}
\DeclareMathOperator{\graph}{graph}
\DeclareMathOperator{\Ham}{Ham}

\DeclareMathOperator{\Symp}{Symp}
\DeclareMathOperator{\Barc}{Bar}

\newcommand{\newsup}{\mathop{\smash{\mathrm{sup}}}}
\newcommand{\BM}{\mathcal{B}\mathcal{M}}

\renewcommand{\epsilon}{\varepsilon}
\renewcommand{\phi}{\varphi}

\begin{document}

\title{Billiards and Hofer's geometry}

\author{Mark Berezovik}

\address{Mark Berezovik, School of Mathematical Sciences, Tel Aviv University, Israel,  69978}

\email{m.berezovik@gmail.com}

\author{Konstantin Kliakhandler}

\address{Konstantin Kliakhandler}

\email{kosta@slumpy.org}

\author{Yaron Ostrover}

\address{Yaron Ostrover, School of Mathematical Sciences, Tel Aviv University, Israel,  69978}

\email{ostrover@tauex.tau.ac.il}

\author{Leonid Polterovich}

\address{Leonid Polterovich, School of Mathematical Sciences, Tel Aviv University, Israel,  69978}

\email{polterov@tauex.tau.ac.il}

\thanks{Mark Berezovik and Yaron Ostrover were partially supported by the ISF grant 938/22} 
\thanks{Mark Berezovik and Leonid Polterovich were partially supported by the ISF-NSFC grant 3231/23}

\subjclass[2020]{37C83, 53D22}

\begin{abstract}
We present a link between billiards in convex plane domains and Hofer's geometry, an area of symplectic topology. For smooth strictly convex billiard tables, we prove that the Hofer distance between the corresponding billiard ball maps admits an upper bound in terms of a simple geometric distance between the tables. We use this result to show that the billiard ball map of a convex polygon lies in the completion, with respect to Hofer's metric, of the group of smooth area-preserving maps of the annulus. Finally, we discuss related connections to dynamics and pose several open problems.
\end{abstract}

\maketitle

\section{Introduction} \label{sec:intro}

In the present paper we discuss a link between billiards in convex plane domains and Hofer's geometry, an area of modern symplectic topology. Recall that the dynamics of a billiard ball moving inside a convex table $K$ with a smooth boundary of positive curvature, and obeying the law that the angle of incidence equals the angle of reflection, is modeled by an area and orientation preserving  smooth map $\psi_K$ of the open annulus $A = S^1 \times (-1,1)$, which extends continuously to the identity on the boundary. Denote by $\Symp(A,\partial A)$ the group of such diffeomorphisms, and by $\Ham (A,\partial A)$ its subgroup formed by Hamiltonian diffeomorphisms of $A$ generated by the Hamiltonians which continuously extend by zero to the boundary (see Definition~\ref{def:ham_path} below).  Let $\psi_D$ be the billiard ball map of a disc. As we shall see below, billiard ball maps lie in the subset
\begin{equation}\label{def:BM}
\BM:= \psi_D \cdot \Ham (A,\partial A) \subset  \Symp(A,\partial A)\;.
\end{equation}
For any two elements $\phi, \psi \in \BM$, the diffeomorphism $\phi^{-1}\psi$ lies in  $\text{Ham} (A,\partial A)$. Thus, the set $\BM$ is equipped with the Hofer metric $d_H$ \cite{hofer_1990}, which, roughly speaking, measures the minimal amount of energy required to generate a path connecting $\phi$ and $\psi$.

Our first result states that the Hofer distance $d_H(\psi_K, \psi_L)$ between the billiard ball maps associated with the tables $K$ and $L$ admits an upper bound in terms of a simple geometric distance between $K$ and $L$ (see Theorem~\ref{thm:main} below). We discuss
an application of this result to the billiard ball dynamics in Corollary \ref{cor-dyn}.

Next, we focus on the case when $K$ is a convex polygon. This table is not strictly convex, and the boundary is not smooth. We prove that one can approximate $K$ by a sequence of smooth convex tables with a positive curvature so that the corresponding sequence of billiard ball maps is Cauchy in the Hofer metric (see Theorem~\ref{thm:approx} below). Roughly speaking, this means that the billiard ball map of a polygon lies in the completion of $\BM$ with respect to Hofer's metric. While this and related completions~\cite{arnaud2024higher, buhovsky2024dichotomy, humiliere2008some, viterbo2022supports} have recently attracted considerable attention, dynamically and physically meaningful examples remain quite rare.

The paper is organized as follows. In Section~\ref{sec:preliminaries}, we set the stage by defining the Hofer metric and introducing the geometric distance between billiard tables, which will be further discussed in Section~\ref{sec:comparison}. In Section~\ref{sec:main_theorem} we formulate and prove the comparison theorem (Theorem~\ref{thm:main}) between the Hofer distance and the geometric distance, as well as present an application to dynamics. Here we use a rather standard fact about the Hamilton--Jacobi equation, which is recalled in the Appendix~\ref{section:Appendix}. Section~\ref{sec:approximation_of_polygons} deals with the approximation of polygons (Theorem~\ref{thm:approx}). Finally, in Section~\ref{sec:discussion} we discuss some future directions and open problems.

\subsection*{Acknowledgments} The authors thank Misha Bialy for fruitful discussion, and Jinxin Xue and Michael Fraiman for helpful comments. Part of this work was carried out during L.P.'s sabbatical stay at the University of Chicago, whose stimulating research environment is gratefully acknowledged.

\section{Preliminaries} \label{sec:preliminaries}

Throughout the paper we shall deal with several closely related notions of billiard tables.

\begin{definition}
A \emph{billiard table} $K \subset \mathbb{R}^2$ is a smooth convex body whose boundary $\partial K$ is of length $1$ and has positive curvature. A \emph{marked billiard table} is a pair $(K, S)$, where $K$ is a billiard table, and $S$ is a point on the boundary $\partial K$.
\end{definition}

\noindent
The set of all marked billiard tables will be denoted by $\mathcal{B}$. Note that the group $S^1= \mathbb{R}/Z$ acts freely on $\mathcal{B}$ by counter-clockwise rotations of the marked point along the boundary. With this language the space of (non-marked) billiard tables is $\mathcal{B}/S^1$. Additionally, the group $\SE(2)$ of orientation-preserving Euclidean isometries of the plane acts on $\mathcal{B}$. We denote $\mathcal{AB} = \mathcal{B}/\SE(2)$.

\medskip\noindent{\bf Convention:} For the sake of brevity , we call elements from $\mathcal{B}/S^1$, $\mathcal{B}$, and $\mathcal{AB}$ simply \emph{ billiard tables}; the precise meaning will be clear from the context.

\medskip

Consider a billiard table $(K,S)$ and let $\gamma \colon S^1 = \mathbb{R}/\mathbb{Z} \to \mathbb{R}^2$ be the counterclockwise arc-length parametrization of $\partial K$ with $\gamma(0) = S$.
Such a curve is uniquely determined by the billiard table, and since it uniquely determines the table itself, we shall refer to it as a billiard table as well.

\begin{definition}
	Let $\alpha,\beta$ be two billiard tables. We define the ``geometric'' distance between them as
	\[
		d_{B}(\alpha,\beta) = \inf [l_{B}(\gamma_s)],
	\]
	where the infimum is taken over all smooth homotopies $\gamma_s$ between $\alpha$ and $\beta$ such that $\gamma_s \in \mathcal{B}$ for every $s \in [0,1]$. The length of a homotopy is defined as follows
	\[
		l_{B}(\gamma_s) = \int_0^1 \max_{q \in S^1} \left\|\frac{\partial \gamma_s}{\partial s} (q)\right\| \, ds,
	\]
	where $\|\cdot\|$ is the Euclidean norm.
\end{definition}

\begin{remark}
	Distance $d_{B}$ is the \emph{metric length structure} of space $\mathcal{B}$ with $C^0$-distance. See~\cite{gromov2007metric} for the definition of a metric length structure.
\end{remark}

The metric $d_B$ descends to a metric on $\mathcal{AB}$ which we denote by $d_{AB}$\footnote{The reader can check that $d_{AB}$ is non-degenerate directly, but this also follows from Remark~\ref{rem:nondegeneracy}, Theorem~\ref{thm:main} and Lemma~\ref{lem:inject}.}.

\medskip

Let $A$ be the open annulus $S^1 \times (-1,1)$ and $\bar{A}$ be the closed annulus $S^1 \times [-1,1]$.

\begin{definition}
	For every curve $\gamma \in \mathcal{B}$ define a map $\psi_\gamma \colon A \to A$,\, $\psi_\gamma(q,p)=(Q,P)$ so that
	\begin{align*}
	 p &= \left\langle \frac{\gamma(Q) - \gamma(q)}{\|\gamma(Q) - \gamma(q)\|}, \gamma'(q)\right\rangle,\\
	 P &= \left\langle \frac{\gamma(Q) - \gamma(q)}{\|\gamma(Q) - \gamma(q)\|}, \gamma'(Q)\right\rangle.
	\end{align*}
	Such a map is unique and is called the \emph{billiard ball map} of the billiard table $\gamma$ (see~\cite{tabachnikov2005geometry,siburg2004principle}).
\end{definition}

	The map $\psi_{\gamma}$ is smooth by the implicit function theorem, and it is well known that this is a symplectomorphism with respect to the standard symplectic structure $\omega = dp \wedge dq$ on~$A$~\mbox{\cite[Section 8.3]{Mcduff_salamon}}. Note that the map $\psi_\gamma$ can be continuously extended to $\bar{A}$ by setting $\psi_\gamma|_{\partial A} = \id_{\partial A}$, but this map would not be smooth near the boundary~\cite[Section 3]{Marvizi}.

\begin{definition}\label{def:ham_path}
	Let $\psi_s$ be a smooth path of symplectomorphisms of the annulus $A$ for $s \in [0,1]$ that can be continuously extended as a map $A \times [0,1] \to A$  to a map $\bar{A} \times [0,1] \to \bar{A}$ by setting $\psi_s|_{\partial A} = \id_{\partial A}$. We shall call this path \emph{Hamiltonian} if there exists a smooth family of functions $H_s \colon A \to \mathbb{R}$ such that $i_{X_s} \omega = -dH_s$ for every $s \in [0,1]$, where $X_s$ is uniquely determined by
	\[
		\frac{d\psi_{s}}{ds} = X_s \circ \psi_s,
	\]
	and $H_s$ can be continuously  extended as a function on $A \times [0,1]$ to a function on $\bar{A} \times [0,1]$ by setting $H_s|_{\partial A} = 0$. Note that the family of functions $H_s \colon A \to \mathbb{R}$ is unique due to this boundary condition.
\end{definition}

\begin{definition}

	$\Ham (A, \partial A) = \{\psi_1 :\ \psi_s\ \text{--- Hamiltonian path such that}\quad \psi_0 = \id_A\}$

\end{definition}

\begin{definition}
	Let $\psi_s \colon A \to A$ be a Hamiltonian path and $H_s \colon A \to \mathbb{R}$ be the corresponding family of functions. The \emph{Hofer's length}~\cite{hofer_1990} of this path is defined as follows
	\[
		l_H(\psi_s) = \int_0^1 \left(\max_{\bar{A}} H_s - \min_{{\bar{A}}} H_s\right) ds = \int_0^1 \left(\sup_{A} H_s - \inf_{A} H_s\right) ds .
	\]
\end{definition}

\begin{definition}
	Let $\varphi, \psi$ be smooth symplectomorphisms of $A$ that can be continuously extended to the boundary by setting $\varphi|_{\partial A} = \psi|_{\partial A} = \id_{\partial A}$. The \emph{Hofer's distance} $d_H$ between $\varphi$ and $\psi$ is defined as the infimum of the length of the Hamiltonian paths that join them.
\end{definition}

\begin{remark} \label{rem:nondegeneracy}
	To see that $d_H$ in that specific case is non-degenerate let us consider a map $\psi \in \Ham (A, \partial A)$ that is not identical. Then there exists a point $x \in A$ such that $\psi(x) \neq x$. Let $U$ be a small neighborhood of $x$ such that $\psi(U) \cap U = \varnothing$ and the closure of $U$ is a subset of $A$. Let $\widetilde{e}(U)$ be the standard displacement energy of $U$, that is
	\[
		\widetilde{e}(U) = \inf \left\{ \int_0^1 \left(\sup_{A} \widetilde{H}_s - \inf_{A} \widetilde{H}_s\right) ds \right\},
	\]
	where infimum is taken over all Hamiltonians $\widetilde{H}_s$ compactly supported in the interior of the annulus, such that the corresponding time-1 map $\phi_{\widetilde{H}}$ satisfies $\phi_{\widetilde{H}}(U) \cap U = \varnothing$. Since $U$ is non-empty open set we have $\widetilde{e}(U) > 0$ (see~\cite{lalonde1995geometry} and~\cite[Section 12.3]{Mcduff_salamon}).

	Now, let $\psi_s$ be a Hamiltonian path with the Hamiltonian function $H_s$, such that $\psi_0 = \id_{A}$ and $\psi_1 = \psi$. Let $\rho$ be a smooth bump function with a compact support in $A$ such that $\rho$ equals one in the small neighborhood of the trace of $U$ under the flow $\psi_s$. Then the Hamiltonian function $\widetilde{H}_s = \rho \cdot H_s$ generates time-1 map $\phi_{\widetilde{H}}$ such that $\phi_{\widetilde{H}}(U) = \psi(U)$, hence $\phi_{\widetilde{H}}(U) \cap U = \varnothing$. Then, by the definition of $\widetilde{e}(U)$, we have
	\[
			0 < \widetilde{e}(U) \leq \int_0^1 \left(\sup_{A} \widetilde{H}_s - \inf_{A} \widetilde{H}_s\right) ds \leq	 \int_0^1 \left(\sup_{A} H_s - \inf_{A} H_s\right) ds.
	\]
	Therefore,
	\[
		0< \widetilde{e}(U) \leq d_{H}(\id_A, \psi).
	\]
	Thus, $d_H$ is indeed non-degenerate.
\end{remark}

	In the last remark we considered the notion of the standard displacement energy. Let us introduce the notion of the displacement energy associated with the group $\Ham(A,\partial A)$.

	\begin{definition}
	 Let $X$ be a subset of $A$. Then the \emph{displacement energy} of $X$ associated with the group $\Ham(A,\partial A)$ is defined as follows
	 \[
	 		e(X) = \inf\{d_H(\id_{A},\psi)|\, \psi \in \Ham(A,\partial A), \, \psi(X) \cap X = \varnothing \}.
	 \]
	\end{definition}

	\begin{remark}\label{rem:energy_compare}
		It is easy to see that $e(X) \leq \widetilde{e}(X)$ for every set $X \subseteq A$. If the closure of $X$ is also contained in $A$, then, by the argument in Remark~\ref{rem:nondegeneracy}, one can obtain $e(X) = \widetilde{e}(X)$.   
	\end{remark}

	To sum up, we have a natural map $\Upsilon: \mathcal{B} \to \mathcal{BM}$ (the latter space is defined in \eqref{def:BM}) which associates a billiard ball map with a marked billiard table. The map $\Upsilon$ is invariant under the action of $\SE(2)$ on $\mathcal{B}$. Let us discuss the effect of changing the marked point. Observe that the actions of $\SE(2)$ and $S^1$ on $\mathcal{B}$ commute. Thus, we have a natural isometric action of $S^1$ on $\mathcal{AB}$ given by $r \cdot [\gamma(q)] = [\gamma(q+r)]$, where $r \in S^1$ and $[\gamma(q)] \in \mathcal{AB}$. There is also an $S^1$-action on the annulus $A = S^1 \times (-1,1)$ given by $\varphi_r (q,p) = (q-r,p)$. This allows us to define an isometric action of $S^1$ on $\BM$ given by $r\cdot \psi = \varphi_r^{-1} \circ \psi \circ \varphi_r$. The map $(\mathcal{AB}, d_{AB}) \to (\BM, d_{H})$ is equivariant with respect to the corresponding $S^1$-actions. This map and these $S^1$-actions can be isometrically extended to the completions $\overline{\mathcal{AB}}$ and $\overline{\BM}$. The extended map is $S^1$-equivariant.

\section{A comparison theorem} \label{sec:main_theorem}
\begin{theorem}\label{thm:main}
	Any two billiard ball maps can be joined by a Hamiltonian path, and hence they belong to the set $\BM$ defined by \eqref{def:BM}. Furthermore, for any two billiard tables $\alpha$, $\beta$ and corresponding billiard ball maps $\psi_\alpha$, $\psi_\beta$ the following inequality holds:
	\[
		d_{H}(\psi_{\alpha}, \psi_{\beta}) \leq 4\cdot d_B(\alpha,\beta).
	\]
\end{theorem}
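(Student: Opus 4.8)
The plan is to realize the comparison constructively. Fix a smooth homotopy $\gamma_s$, $s\in[0,1]$, through billiard tables (elements of $\mathcal B$) with $\gamma_0=\alpha$ and $\gamma_1=\beta$. I will show that the induced path of billiard ball maps $s\mapsto\psi_{\gamma_s}$ is a Hamiltonian path in the sense of Definition~\ref{def:ham_path}, estimate its Hofer length by $l_H(\psi_{\gamma_s})\le 4\,l_B(\gamma_s)$, and then pass to the infimum over all admissible homotopies. This also settles the first assertion: any two elements of $\mathcal B$ are joined by a smooth homotopy within $\mathcal B$, since $\mathcal B$ is path-connected (normalize the arc length to $1$ and linearly interpolate the support functions — the radius of curvature $h+h''$ is origin-independent and stays positive under interpolation — then rescale; Euclidean motions of a fixed table are also such homotopies). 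Hence $\psi_\alpha$ and $\psi_\beta$ are joined by a Hamiltonian path, so $\psi_\alpha^{-1}\psi_\beta\in\Ham(A,\partial A)$, and taking $\beta$ to be a disc gives $\psi_\alpha\in\psi_D\cdot\Ham(A,\partial A)=\BM$.

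\textbf{Generating function and the Hamilton--Jacobi step.} For $q\neq Q$ set $h_s(q,Q):=\|\gamma_s(Q)-\gamma_s(q)\|$, the Euclidean ``string length''. Comparing with the defining relations of $\psi_{\gamma_s}$ in the Definition of the billiard ball map, one reads off $p=-\partial_q h_s$ and $P=\partial_Q h_s$, so $h_s$ is the classical generating function of $\psi_{\gamma_s}$ (and $(q,Q)$, $q\neq Q$, is a global coordinate on its graph); the map is smooth on $A$ by the implicit function theorem, smoothly in $s$. Now invoke the standard fact about the Hamilton--Jacobi equation recalled in Appendix~\ref{section:Appendix}: the Hamiltonian $H_s$ generating the path $\psi_{\gamma_s}$ (with $i_{X_s}\omega=-dH_s$ and $\tfrac{d\psi_s}{ds}=X_s\circ\psi_s$) equals $-\partial_s h_s$, transported to the annulus along the graph of $\psi_{\gamma_s}$, up to an additive constant depending only on $s$. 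Explicitly,
\[
  \partial_s h_s(q,Q)=\left\langle \frac{\gamma_s(Q)-\gamma_s(q)}{\|\gamma_s(Q)-\gamma_s(q)\|},\ \frac{\partial\gamma_s}{\partial s}(Q)-\frac{\partial\gamma_s}{\partial s}(q)\right\rangle .
\]
As $p\to\pm1$ the map $\psi_{\gamma_s}$ tends to the identity, so $Q\to q$ and therefore $\partial_s h_s\to 0$ near $\partial A$; this pins the additive constant to $0$ and shows $H_s$ extends continuously by $0$ to $\bar A$. Combined with the continuous extension $\psi_{\gamma_s}|_{\partial A}=\id_{\partial A}$, this confirms that $s\mapsto\psi_{\gamma_s}$ is a Hamiltonian path with generating family $H_s$.

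\textbf{The estimate.} Since $\partial_s h_s$ is an inner product of a unit vector with $\tfrac{\partial\gamma_s}{\partial s}(Q)-\tfrac{\partial\gamma_s}{\partial s}(q)$, we get on all of $\bar A$
\[
  |H_s|=|\partial_s h_s|\ \le\ \left\|\frac{\partial\gamma_s}{\partial s}(Q)\right\|+\left\|\frac{\partial\gamma_s}{\partial s}(q)\right\|\ \le\ 2\max_{u\in S^1}\left\|\frac{\partial\gamma_s}{\partial s}(u)\right\| ,
\]
hence $\max_{\bar A}H_s-\min_{\bar A}H_s\le 4\max_{u\in S^1}\|\tfrac{\partial\gamma_s}{\partial s}(u)\|$, and therefore
\[
  l_H(\psi_{\gamma_s})=\int_0^1\Bigl(\max_{\bar A}H_s-\min_{\bar A}H_s\Bigr)\,ds\ \le\ 4\int_0^1\max_{u\in S^1}\left\|\frac{\partial\gamma_s}{\partial s}(u)\right\|\,ds\ =\ 4\,l_B(\gamma_s).
\]
Taking the infimum over all smooth homotopies $\gamma_s$ with $\gamma_s\in\mathcal B$ yields $d_H(\psi_\alpha,\psi_\beta)\le 4\,d_B(\alpha,\beta)$.

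\textbf{Main obstacle.} The computations on the open annulus are routine; the delicate point is the boundary behaviour. One must verify that $H_s=-\partial_s h_s$ (transported via the graph) is genuinely the \emph{continuous} extension by $0$ to $\bar A$ — i.e.\ that $\partial_s h_s(q,Q)\to 0$ uniformly as $(q,p)\to\partial A$, not merely along radial approach — which relies on the (continuous, non-smooth) behaviour of the billiard ball map in the grazing limit $Q\to q$, as analyzed by Marvizi--Melrose. Care is also needed to ensure the whole construction depends smoothly on $s$ up to choosing the homotopy $\gamma_s$ smoothly in $(s,q)$; this is then automatic from the implicit function theorem.
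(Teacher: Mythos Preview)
Your proof is correct and follows essentially the same route as the paper: you use the chord-length generating function $h_s(q,Q)=\|\gamma_s(Q)-\gamma_s(q)\|$, invoke the Hamilton--Jacobi lemma from the Appendix to identify $H_s=-\partial_s h_s$ (with the additive constant fixed to zero by the boundary condition), bound $|H_s|\le 2\max_{u}\|\partial_s\gamma_s(u)\|$ via the unit-vector/triangle inequality, and integrate. The paper packages the Hamilton--Jacobi step and the boundary verification into a separate lemma (lifting to the universal cover to match the Appendix's hypotheses and checking periodicity of $\widetilde H_s$), but the substance is identical.
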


The following lemma is needed for the proof of the theorem.

\begin{lemma}\label{lem:ham_gen_fund}
	Let $\gamma_s$ be a smooth path of billiard tables and $\psi_s$ be the corresponding path of symplectomorphisms of the annulus $A$. Consider the smooth family of functions $F_s \colon (S^1 \times S^1) \setminus {\Delta_{S^1}} \to \mathbb{R}$ defined as
	\[
		F_s(q,Q) = \|\gamma_s(q) - \gamma_s(Q) \|,
	\]
	where $\Delta_{S^1} = \{(q,q)\in S^1 \times S^1: q \in S^1\}$. Then the family of functions defined as
	\[
		H_s(Q,P) = - \frac{\partial F_s}{\partial s}(q_s(Q,P),Q)
	\]
	 is the family of Hamiltonian functions for the path $\psi_s$, where
	 \[
	 	(q_s(Q,P), p_s(Q,P)) = \psi^{-1}_s(Q,P).
	 \]
	 In particular, the path $\psi_s$ is Hamiltonian.
\end{lemma}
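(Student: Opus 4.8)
\emph{Proof plan.} The plan is to exhibit $\{H_s\}$ as the Hamiltonian family for $\psi_s$ and then verify, one by one, all the requirements of Definition~\ref{def:ham_path}. The starting observation is the classical fact that $F_s$ is a \emph{generating function} for the billiard ball map $\psi_s$. Writing $\psi_s(q,p)=(Q_s(q,p),P_s(q,p))$ and differentiating $F_s(q,Q)=\|\gamma_s(q)-\gamma_s(Q)\|$, one reads off directly from the defining relations for $\psi_\gamma$ that
\[
\frac{\partial F_s}{\partial q}(q,Q) = -p, \qquad \frac{\partial F_s}{\partial Q}(q,Q) = P \qquad \text{along } \graph(\psi_s).
\]
I would reformulate this as follows: with $\lambda := p\,dq$ (so that $d\lambda = \omega$) and with the function $g_s(q,p) := F_s\bigl(q, Q_s(q,p)\bigr)$, which is smooth on $A$ because $Q_s(q,p)\neq q$ whenever $|p|<1$, the chain rule yields the exactness relation $\psi_s^*\lambda - \lambda = dg_s$. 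Note also that, $\psi_s$ being a diffeomorphism, the equation $p = -\partial_q F_s(q,Q)$ is solvable for $Q$ in terms of $(q,p)$, which forces the twist condition $\partial^2 F_s/\partial q\,\partial Q \neq 0$ throughout the domain; I will use this whenever an inversion is needed.

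Next I would differentiate $\psi_s^*\lambda = \lambda + dg_s$ in $s$. The right-hand side contributes $d(\partial_s g_s)$. For the left-hand side, applying the standard ``moving-frame'' formula $\frac{d}{ds}\psi_s^*\alpha = \psi_s^*\mathcal{L}_{X_s}\alpha$ for the isotopy $\psi_s$ with generating vector field $X_s$, and then Cartan's identity together with $d\lambda = \omega$, gives
\[
\frac{d}{ds}\psi_s^*\lambda = \psi_s^*\bigl(d\,i_{X_s}\lambda + i_{X_s}\omega\bigr) = d\bigl(\psi_s^*\,i_{X_s}\lambda\bigr) + \psi_s^*\bigl(i_{X_s}\omega\bigr).
\]
Comparing the two expressions and applying $(\psi_s^{-1})^*$ isolates $i_{X_s}\omega = -dH_s$ with $H_s = i_{X_s}\lambda - (\psi_s^{-1})^*(\partial_s g_s)$. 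Finally I would unravel this. Evaluating at a point $(Q,P)$, the term $i_{X_s}\lambda$ equals $P\,\dot Q$ with $\dot Q$ the $q$-component of $X_s$ there, while the chain rule combined with $\partial_Q F_s = P$ on the graph gives $\partial_s g_s = \tfrac{\partial F_s}{\partial s}(q,Q) + P\,\dot Q$; after the substitution $(q,p)=\psi_s^{-1}(Q,P)$ the $P\dot Q$ terms cancel, leaving exactly $H_s(Q,P) = -\tfrac{\partial F_s}{\partial s}\bigl(q_s(Q,P),Q\bigr)$, as desired. (An equivalent, more computational route is to verify Hamilton's equations $\dot Q = \partial_P H_s$, $\dot P = -\partial_Q H_s$ directly along a trajectory $(Q(s),P(s))=\psi_s(q,p)$, differentiating $p = -\partial_q F_s(q,Q(s))$ and $P(s)=\partial_Q F_s(q,Q(s))$ in $s$ and using that the mixed partials of $F_s$ commute.)

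It remains to verify the boundary conditions in Definition~\ref{def:ham_path}. Here I would first observe that as $Q\to q$ the unit chord vector $\tfrac{\gamma_s(Q)-\gamma_s(q)}{\|\gamma_s(Q)-\gamma_s(q)\|}$ tends to $\pm\gamma_s'(q)$, so $p\to\pm1$; thus $\partial A = \{|p|=1\}$ corresponds to $Q=q$, and hence the map $(Q,P)\mapsto\bigl(q_s(Q,P),Q\bigr)$ extends continuously to $\bar{A}$ taking $\partial A$ into the diagonal $\Delta_{S^1}$. On the other hand, since the chord direction has unit norm, $\bigl|\tfrac{\partial F_s}{\partial s}(q,Q)\bigr| \le \bigl\|\tfrac{\partial\gamma_s}{\partial s}(q) - \tfrac{\partial\gamma_s}{\partial s}(Q)\bigr\|$, and continuity of $\partial_s\gamma_s$ in the $S^1$-variable shows that $\partial_s F_s$ extends continuously by $0$ to all of $S^1\times S^1$. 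Composing the two facts, $H_s$ extends continuously to $\bar{A}$ with $H_s|_{\partial A}=0$, and $H_s(Q,P)$ is clearly smooth in $(s,Q,P)$ on $A\times[0,1]$ since $F_s$ is smooth off the diagonal and $\psi_s^{-1}$ is smooth. Together with the fact that each $\psi_s$, being a billiard ball map, extends continuously to $\id_{\partial A}$ (jointly continuously in $s$), this provides precisely the data of Definition~\ref{def:ham_path}, so the path $\psi_s$ is Hamiltonian with Hamiltonian family $\{H_s\}$.

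The main obstacle is less a single hard step than careful bookkeeping in the middle part: one must keep straight which variables are frozen when $F_s$ is differentiated in $s$ --- the derivative ``at fixed $(q,Q)$'', which enters $H_s$, versus the derivative ``at fixed $(q,p)$'', which governs the actual motion --- and make sure every inversion is legitimized by the billiard twist condition. The boundary analysis is the other delicate point, since it is exactly what upgrades ``$\psi_s$ is a symplectic isotopy'' to ``$\psi_s$ is Hamiltonian relative to $\partial A$'' in the sense required here; everything else is routine calculus.
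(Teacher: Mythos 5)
Your proof is correct, and its final step (the bound $\bigl|\partial_s F_s(q,Q)\bigr|\le\bigl\|\partial_s\gamma_s(q)-\partial_s\gamma_s(Q)\bigr\|$ together with the continuous extension of $\psi_s^{\pm1}$ by the identity on $\partial A$) coincides with the paper's boundary argument; but the middle part reaches the identity $i_{X_s}\omega=-dH_s$ by a genuinely different mechanism. The paper lifts $\psi_s$ to the universal cover $\mathbb{R}\times(-1,1)$, where $\widetilde F_s$ generates $\widetilde\psi_s$ in the classical sense, applies the Hamilton--Jacobi equation (Lemma~\ref{lem:Ham-Jac}, proved in the Appendix by a direct chain-rule computation in coordinates), and then checks that the resulting $\widetilde H_s$ is $1$-periodic in $Q$ so that it descends to the annulus. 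You stay on $A$ itself: since $g_s(q,p)=F_s(q,Q_s(q,p))$ is a genuine smooth function on $A$, the relation $\psi_s^*\lambda-\lambda=dg_s$ holds globally, and differentiating it in $s$ via Cartan's formula gives $H_s=i_{X_s}\lambda-(\psi_s^{-1})^*(\partial_s g_s)$, which collapses to the stated formula. Your route buys two things: the flux/monodromy issue is absorbed automatically (exactness of $\psi_s^*\lambda-\lambda$, witnessed by $g_s$, plays the role of the paper's periodicity check), and the formula for $H_s$ comes out on the nose, whereas the appendix lemma determines $\widetilde H_s$ only up to a function of $s$, with the boundary condition supplying the final normalization in both arguments. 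The paper's route is more elementary (no Lie-derivative calculus) and isolates the Hamilton--Jacobi step as a self-contained reusable lemma; your parenthetical alternative of verifying Hamilton's equations directly is essentially that appendix computation. One cosmetic remark: your twist-condition aside states the implication in the reverse of the usual direction (positive curvature gives $\partial^2F_s/\partial q\,\partial Q\neq0$, hence solvability), but since you can read the nonvanishing off by differentiating $p=-\partial_qF_s(q,Q_s(q,p))$ in $p$, nothing is lost.
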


\begin{proof}
	Consider the lift $\widetilde{\psi}_s$ of $\psi_s$ to the universal cover $\mathbb{R} \times [-1,1]$ such that $\widetilde{\psi}_s (q,1) = (q,1)$. It is well known that the symplectomorphism $\widetilde{\psi}_s \colon \mathbb{R} \times (-1,1) \to \mathbb{R} \times (-1,1)$ is generated by the function
	\[
		\widetilde{F}_s(q,Q) = \|\gamma_s(q) - \gamma_s(Q) \|
	\]
	defined on $U \times [0,1]$, where $U = \{(q,Q) \in \mathbb{R}^2 : q < Q < q+1\}$ (see~\cite[Section 8.3]{Mcduff_salamon}). Here the word ``generated'' means that
	\begin{align*}
		\frac{\partial \widetilde{F}_s}{\partial q} (q, \widetilde{Q}_s(q,p)) &= - p, \\
		\frac{\partial \widetilde{F}_s}{\partial Q} (q, \widetilde{Q}_s(q,p)) & = \widetilde{P}_s(q,p),
	\end{align*}
	where $(\widetilde{Q}_s(q,p), \widetilde{P}_s(q,p)) = \widetilde{\psi}_s(q,p)$.

	Therefore, we have obtained a family of symplectomorphisms $\widetilde{\psi}_s$ of the open simply connected subset $\mathbb{R} \times (-1,1)$ of the plane and a smooth family of functions $\widetilde{F}_s$, such that for every $s \in [0,1]$ the function $\widetilde{F}_s$ is the generating function for the symplectomorphism $\widetilde{\psi}_s$. It is well known from classical mechanics (the Hamilton--Jacobi equation, see Lemma~\ref{lem:Ham-Jac} in Appendix~\ref{section:Appendix}) that this family is Hamiltonian (without any boundary condition) with the Hamiltonian function
	\[
		\widetilde{H}_s(Q,P) = -\frac{\partial \widetilde{F}_s}{\partial s}(\widetilde{q}_s(Q,P),Q).
	\]
	The function $\widetilde{H}_s(Q,P)$ is $1$-periodic in $Q$. Indeed, $\widetilde{q}_s(Q+1,P) = \widetilde{q}_s(Q,P) + 1$ and $\widetilde{F}_s (q+1,Q+1) = \widetilde{F}_s(q,Q)$.

	Since every billiard table $\gamma_s$ has positive curvature it follows that the family $\psi_s$ as well as the family $\psi_s^{-1}$ can be continuously extended to a map $\bar{A} \times [0,1] \to \bar{A}$ by setting $\psi_s|_{\partial A} = \id_{\partial A}$ and $\psi_s^{-1}|_{\partial A} = \id_{\partial A}$. Thus, we only need to prove that $H_s(Q,P)$ can be continuously extended on $\bar{A} \times [0,1]$ by setting $H_s|_{\partial A} = 0$.	To see that such an extension is indeed continuous, consider the inequality
	\[
		|H_s(Q,P)| = \left|\frac{\partial F_s}{\partial s}(q_s(Q,P),Q)\right| \leq \left\|\frac{\partial \gamma_s}{\partial s} (q_s(Q,P)) - \frac{\partial \gamma_s}{\partial s}(Q)\right\|.
	\]
	The right-hand side is continuous on $[0,1]\times \bar{A}$ and equals $0$ when $P = \pm 1$.
\end{proof}

\begin{proof}[Proof of Theorem~\ref{thm:main}]
	Since any two billiard tables can be joined by a path of billiard tables, the first statement of the theorem follows from Lemma~\ref{lem:ham_gen_fund}. Next, consider an arbitrary path $\gamma_s$ of billiard tables joining billiard tables $\alpha$ and $\beta$. Let $\psi_s$ be the corresponding Hamiltonian path of symplectomorphisms of the annulus $A$, and let $H_s \colon A \to \mathbb{R}$ be the smooth family of Hamiltonian functions corresponding to $\psi_s$. By Lemma~\ref{lem:ham_gen_fund} the following inequality holds
		\begin{multline*}
			l_{H}(\psi_s) = \int_0^1 \left(\newsup_{A}H_s - \inf_{A}H_s\right)\, ds = \\
			= \int_0^1 \left(\newsup_{S^1\times S^1 \setminus \Delta_{S^1}} \frac{\partial F_s}{\partial s}  - \inf_{S^1\times S^1 \setminus \Delta_{S^1}} \frac{\partial F_s}{\partial s}\right)\, ds \leq 2 \cdot \int_0^1 \newsup_{S^1\times S^1 \setminus \Delta_{S^1}} \left|\frac{\partial F_s}{\partial s}\right| \, ds,
		\end{multline*}
		where
		\[
			 F_s (q,Q) = \|\gamma_s(q) - \gamma_s(Q)\|.
		\]
		Using the triangle inequality, we obtain
		\[
			\newsup_{S^1\times S^1 \setminus \Delta_{S^1}} \left|\frac{\partial F_s}{\partial s}\right| \leq \newsup_{S^1\times S^1 \setminus \Delta_{S^1}} \left\|\frac{\partial\gamma_s}{\partial s}(q) - \frac{\partial\gamma_s}{\partial s}(Q)\right\|  \leq 2 \cdot \max_{S^1} \left\|\frac{\partial \gamma_s}{\partial s}\right\|.
		\]
		Therefore
		\[
			l_{H}(\psi_s) \leq 4 \cdot \int_0^1 \max_{S^1} \left\|\frac{\partial \gamma_s}{\partial s}\right\| \, ds \leq 4\cdot l_{B}(\gamma_s).
		\]
		Thus
		\[
			d_H(\psi_\alpha,\psi_\beta) \leq 4\cdot d_{B} (\alpha,\beta).
		\]
\end{proof}

	Due to Theorem~\ref{thm:main}, we have the Lipschitz map $\mathcal{B} \to \BM$. However, this map is not injective. To avoid this, it is more natural to consider $\mathcal{AB}$ instead of $\mathcal{B}$.

	\begin{lemma}\label{lem:inject}
	 The map $(\mathcal{AB}, d_{AB}) \to (\BM, d_{H})$ is injective and Lipschitz.
	\end{lemma}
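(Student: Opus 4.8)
The plan is to establish two things: that the map $(\mathcal{AB}, d_{AB}) \to (\BM, d_H)$ is well-defined (which follows from the $\SE(2)$-invariance of $\Upsilon$ noted before the statement, so that $d_H(\psi_\alpha, \psi_\beta)$ depends only on the $\SE(2)$-orbits), that it is Lipschitz (immediate from Theorem~\ref{thm:main}, since passing to the infimum over homotopies only improves the bound), and that it is injective. The substance is injectivity, so I would spend essentially all the effort there.

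For injectivity, suppose $\alpha, \beta \in \mathcal{AB}$ with $\psi_\alpha = \psi_\beta$; I want to show $\alpha$ and $\beta$ are congruent, i.e.\ differ by an element of $\SE(2)$. First I would lift to representatives $\gamma_\alpha, \gamma_\beta \in \mathcal{B}$, both arc-length parametrized curves of length $1$ with positive curvature. The idea is that the billiard ball map remembers the geometry of the table: from $\psi_\gamma$ one should be able to reconstruct $\gamma$ up to rigid motion. Concretely, I would use the generating function. By the discussion in the proof of Lemma~\ref{lem:ham_gen_fund}, the lift $\widetilde{\psi}_\gamma$ of $\psi_\gamma$ to $\mathbb{R} \times (-1,1)$ is generated by $\widetilde{F}_\gamma(q,Q) = \|\gamma(q) - \gamma(Q)\|$ on $U = \{q < Q < q+1\}$, in the sense that $\partial_q \widetilde{F}_\gamma(q, \widetilde{Q}(q,p)) = -p$ and $\partial_Q \widetilde{F}_\gamma(q,\widetilde{Q}(q,p)) = \widetilde{P}(q,p)$. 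Since $\psi_\alpha = \psi_\beta$ forces $\widetilde{\psi}_\alpha = \widetilde{\psi}_\beta$ (they are the lifts fixing the boundary component $\{p=1\}$), the two generating functions $\widetilde{F}_\alpha$ and $\widetilde{F}_\beta$ have the same partial derivatives in $q$ and $Q$ everywhere on $U$, hence differ by a constant; but both tend to $0$ along the diagonal $Q \to q$, so in fact $\widetilde{F}_\alpha = \widetilde{F}_\beta$ on $U$. Therefore the two curves $\gamma_\alpha, \gamma_\beta \colon \mathbb{R}/\mathbb{Z} \to \mathbb{R}^2$ have the \emph{same chord-length function}: $\|\gamma_\alpha(q) - \gamma_\alpha(Q)\| = \|\gamma_\beta(q) - \gamma_\beta(Q)\|$ for all $q,Q$.

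It then remains to argue that a closed curve is determined up to orientation-preserving isometry by its chord-length function together with its arc-length parametrization. This is where I would be most careful. One clean way: fix a basepoint $q_0$, translate so that $\gamma_\alpha(q_0) = \gamma_\beta(q_0) = 0$, and consider the Gram-matrix-type quantity $\langle \gamma(q) - \gamma(q_0), \gamma(Q) - \gamma(q_0)\rangle = \tfrac12\big(\|\gamma(q)-\gamma(q_0)\|^2 + \|\gamma(Q)-\gamma(q_0)\|^2 - \|\gamma(q)-\gamma(Q)\|^2\big)$, which is expressed entirely through the chord-length function and hence agrees for $\alpha$ and $\beta$. Since the curves lie in $\mathbb{R}^2$, pick $q_1$ such that $\gamma_\alpha(q_1) - \gamma_\alpha(q_0)$ and the velocity $\gamma_\alpha'(q_0)$ span the plane (possible by strict convexity), apply the unique rotation making the two ordered bases $(\gamma_\alpha(q_1)-0, \gamma_\alpha'(q_0))$ and $(\gamma_\beta(q_1)-0, \gamma_\beta'(q_0))$ coincide — here one checks the two bases have the same Gram matrix, using that the arc-length parametrization gives $\|\gamma'(q_0)\| = 1$ and that $\langle \gamma(q_1)-\gamma(q_0), \gamma'(q_0)\rangle = \lim_{Q\to q_0} \partial_Q[\cdots]$ is again chord-length data — and then conclude that all the inner products $\langle \gamma_\alpha(q) - 0, v\rangle$ match those of $\gamma_\beta$ for $v$ in this basis, forcing $\gamma_\alpha \equiv \gamma_\beta$ after the rigid motion. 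Hence $[\alpha] = [\beta]$ in $\mathcal{AB}$.

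The main obstacle is precisely this last reconstruction step: one must make sure to track the \emph{marked point and orientation} correctly so that the recovered isometry is orientation-preserving and lies in $\SE(2)$ (not $O(2)$), and that the chord-length function, which a priori is only defined off the diagonal, carries enough derivative information along the diagonal (via $\partial_q$, $\partial_Q$ of $\widetilde{F}$) to pin down the tangent direction at the basepoint. An alternative, perhaps slicker, route for this step is to observe that $\psi_\gamma$ determines, and is determined by, its generating function, and invoke a known rigidity statement that the length spectrum / chord structure of a strictly convex billiard table determines it up to isometry; but I would prefer the self-contained Gram-matrix argument since it only uses elementary Euclidean geometry and the positive-curvature hypothesis. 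Modulo this, well-definedness and the Lipschitz property are formal consequences of the $\SE(2)$-invariance of $\Upsilon$ and of Theorem~\ref{thm:main}.
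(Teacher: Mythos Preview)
Your argument is correct and follows the same approach as the paper: from $\psi_\alpha=\psi_\beta$ deduce equality of the generating functions (same partial derivatives, both vanishing at the diagonal), then recover the curve from its chord-length function up to $\SE(2)$, with the Lipschitz bound coming directly from Theorem~\ref{thm:main}. The only cosmetic difference is in the reconstruction step: the paper triangulates each point $\gamma(t)$ from the two anchors $\gamma(0)$ and $\gamma(1/2)$ (using the counterclockwise orientation to disambiguate the two intersection points), whereas you use the polarization identity and a Gram-matrix argument---both resolve the $\SE(2)$ versus $O(2)$ issue you flag via the fixed counterclockwise parametrization.
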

	\begin{proof}
		Let $\gamma_{1}$ and $\gamma_{2}$ be two billiard tables and assume that the corresponding billiard ball maps $\psi_1,\psi_2 \colon A \to A$ coincide, then the corresponding generating functions $F_i(q,Q) = \|\gamma_i(q) - \gamma_i(Q)\|$ coincide as well. Indeed, they have the same partial derivatives
		\begin{align*}
			\frac{\partial F_i}{\partial q} (q, Q_i(q,p)) &= - p,\\
			\frac{\partial F_i}{\partial Q} (q, Q_i(q,p)) & = P_i(q,p).
		\end{align*}
		and $F_i(q,Q)$ tends to $0$, when $q$ approaches $Q$.

		Note that one can recover (up to a transformation from $\SE(2)$) the billiard table $\gamma$ from the function $F(q,Q) = \|\gamma(q) - \gamma(Q)\|$. Indeed, take two point $S$ and $R$ in the plane such that the distance between them equals $F(0,1/2)$. For any $0<t<1/2$ there exists only one point $\beta(t)$  such that
		\begin{align*}
			\|S- \beta(t)\| &= F(0,t),\\
			\|R- \beta(t)\| &= F(t,1/2),
		\end{align*}
		and vectors $\beta(t)-S$ and $R-S$ form a positively oriented basis of $\mathbb{R}^2$ (see Figure~\ref{fig:constructing_of_beta} below). The same holds for $1/2<t<1$, but the corresponding vectors form negatively oriented basis. If we additionally assume that $\beta(0) =\beta(1) = S$ and $\beta(1/2) = R$ one can check that $\gamma$ and $\beta$ coincide up to the transformation from $\SE(2)$.

		\begin{figure}[ht]
  		\centering
  		\includegraphics[width=0.6\textwidth]{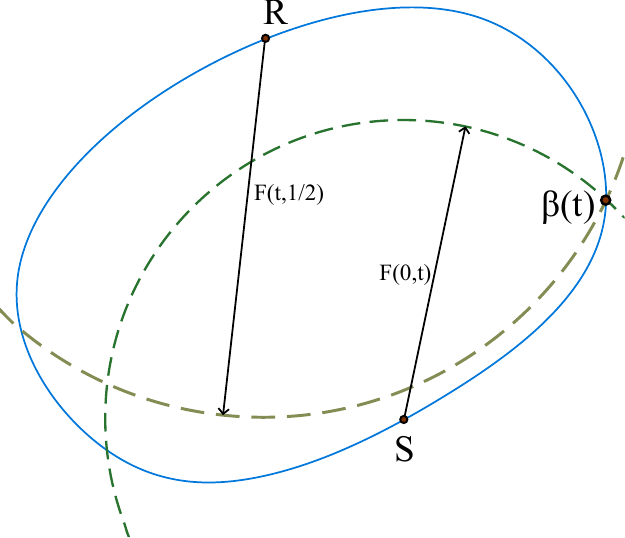}
			\caption{Obtaining the curve $\beta$ ($0 < t < 1/2$).}
			\label{fig:constructing_of_beta}
		\end{figure}

		Thus, the map $(\mathcal{AB}, d_{AB}) \to (\BM, d_{H})$ is injective. The fact that this map is Lipschitz follows from Theorem~\ref{thm:main}.

	\end{proof}

\medskip

We conclude this section with an application of Theorem 
\ref{thm:main}. 

\begin{corollary}\label{cor-dyn}
Let $X$ be a subset of the annulus $A$ such that the displacement energy $e(X)$ is strictly positive. Fix positive integer $n$. Consider two billiard tables $\alpha, \beta$ and the corresponding billiard ball maps $\psi_{\alpha}$ and $\psi_{\beta}$. If $d_B(\alpha,\beta) < e(X)/4n$, then there is a point $x \in X$ with $\psi_\beta^n (x) \in \psi_\alpha^n(X)$.
\end{corollary}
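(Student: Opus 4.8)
\noindent The plan is a standard displacement-energy argument, reducing everything to Theorem~\ref{thm:main} via the sub-additivity of Hofer's length under iteration of a Hamiltonian path. Suppose, for contradiction, that no point $x\in X$ satisfies $\psi_\beta^n(x)\in\psi_\alpha^n(X)$. This means $\psi_\beta^n(X)\cap\psi_\alpha^n(X)=\varnothing$, and applying the bijection $\psi_\alpha^{-n}$ of $A$ it is equivalent to $\varphi(X)\cap X=\varnothing$, where $\varphi:=\psi_\alpha^{-n}\psi_\beta^n$. I will show that $\varphi\in\Ham(A,\partial A)$ and $d_H(\id_A,\varphi)\le 4n\cdot d_B(\alpha,\beta)$; then, by the very definition of the displacement energy, $e(X)\le d_H(\id_A,\varphi)\le 4n\cdot d_B(\alpha,\beta)$, which contradicts the hypothesis $d_B(\alpha,\beta)<e(X)/4n$.

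The key point is the estimate $d_H(\psi_\alpha^n,\psi_\beta^n)\le n\cdot d_H(\psi_\alpha,\psi_\beta)$, obtained by iterating Hamiltonian paths. By Theorem~\ref{thm:main} there is a Hamiltonian path $\psi_s$ joining $\psi_\alpha$ to $\psi_\beta$; let $X_s$ be its generating vector field and $H_s$ the corresponding Hamiltonian. Consider the path $s\mapsto\psi_s^n$, which runs from $\psi_\alpha^n$ to $\psi_\beta^n$. A short induction on $n$, using $\psi_s^n=\psi_s\circ\psi_s^{n-1}$ and the chain rule, shows that $\psi_s^n$ is generated by the vector field $Y_s=\sum_{k=0}^{n-1}(\psi_s^k)_* X_s$; since each $\psi_s^k$ is a symplectomorphism, $Y_s$ is Hamiltonian with Hamiltonian $G_s=\sum_{k=0}^{n-1}H_s\circ\psi_s^{-k}$. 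Each summand has the same oscillation on $A$ as $H_s$, so $\sup_A G_s-\inf_A G_s\le n(\sup_A H_s-\inf_A H_s)$, whence $l_H(\psi_s^n)\le n\cdot l_H(\psi_s)$.

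Now I left-translate to the identity. For any symplectomorphism $g$ of $A$ that extends continuously to $\id_{\partial A}$, and any Hamiltonian path $\Phi_s$ with Hamiltonian $G_s$, the path $g\circ\Phi_s$ is again a Hamiltonian path: it extends continuously to $\id_{\partial A}$, its generating vector field is the push-forward under $g$ of that of $\Phi_s$, hence its Hamiltonian is $G_s\circ g^{-1}$, which has the same oscillation as $G_s$ and still extends by $0$ on the boundary; in particular $l_H(g\circ\Phi_s)=l_H(\Phi_s)$. Taking $g=\psi_\alpha^{-n}$ — legitimate because $\psi_\alpha$, being the billiard ball map of a positively curved table, extends continuously to $\id_{\partial A}$ — and $\Phi_s=\psi_s^n$, I obtain a Hamiltonian path from $\id_A$ to $\varphi$ of Hofer's length $\le n\cdot l_H(\psi_s)$. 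Therefore $\varphi\in\Ham(A,\partial A)$, and taking the infimum over $\psi_s$ and invoking Theorem~\ref{thm:main} gives $d_H(\id_A,\varphi)\le n\cdot d_H(\psi_\alpha,\psi_\beta)\le 4n\cdot d_B(\alpha,\beta)$, as required to close the argument.

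The main obstacle is not conceptual but a matter of boundary bookkeeping: one must check that $\psi_s^n$ and its left translate $\psi_\alpha^{-n}\psi_s^n$ genuinely meet the requirements of Definition~\ref{def:ham_path} — smoothness on the open annulus together with continuous extension of the maps to $\id_{\partial A}$ and of the Hamiltonians by $0$. All of this follows, via composition, from the continuous extension of $\psi_s$ and $\psi_s^{-1}$ (hence of all their iterates) to the identity on $\partial A$, which rests on the positive-curvature hypothesis exactly as in the proof of Lemma~\ref{lem:ham_gen_fund}. The rest — the induction formula for the generator of $\psi_s^n$ and the oscillation estimate for $G_s$ — is a routine computation.
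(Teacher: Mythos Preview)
Your proof is correct and follows the same displacement-energy strategy as the paper. The only difference is cosmetic: the paper obtains $d_H(\id_A,\psi_\beta^{-n}\psi_\alpha^n)\le n\cdot d_H(\psi_\alpha,\psi_\beta)$ by writing $\psi_\beta^{-n}\psi_\alpha^n$ as a product of $n$ conjugates of $\phi=\psi_\beta^{-1}\psi_\alpha$ and invoking conjugation-invariance of $d_H$ together with the triangle inequality, whereas you iterate a Hamiltonian path $\psi_s$ and bound the oscillation of the Hamiltonian of $\psi_s^n$ directly --- two standard implementations of the same sub-additivity fact.
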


For instance, $X$ could be an arbitrary open subset of $A$ (cf. Remark~\ref{rem:nondegeneracy} and Remark~\ref{rem:energy_compare}).

\begin{proof} Assume on the contrary that  $\psi_{\alpha}^n(X) \cap \psi_{\beta}^n(X) = \varnothing$, i.e.,
$X \cap \psi_\beta^{-n} \circ \psi_\alpha^n(X) = \varnothing$. 
Put $\psi_\alpha = \psi_\beta \circ \varphi$, with
$\phi \in \Ham(A,\partial A)$. We have
$$\psi_\alpha^n = \psi_\beta^n \prod_{j=n-1}^{0} \psi_\beta^{-j} \circ \phi \circ \psi_\beta^{j} \;.$$
The Hofer metric is invariant under conjugations by elements from $\Symp(A,\partial A)$.
Thus, by the triangle inequality and  Theorem \ref{thm:main}  we have 
 \[
 		d_{H}(\id_A, \psi_\beta^{-n} \circ \psi_\alpha^n)  \leq n\cdot d_{H}(\psi_{\alpha}, \psi_{\beta}) \leq 4n\cdot d_{B}(\alpha,\beta).
 \]
 Therefore, by definition of the displacement energy, 
 \[
 		0 < e(X) \leq 4n \cdot d_{B}(\alpha,\beta),
 \]
 and we got a contradiction. 
 \end{proof}
  
This is a version of an argument  which appears in~\cite[proof of Lemma 2.1.10]{polterovich2014function}. We refer to Section \ref{sec:discussion} for further discussion of the corollary.

\section{Approximation of polygons}\label{sec:approximation_of_polygons}
	\begin{definition}
		Let $\mathcal{P}$ be a set of pairs $(P, S)$, where $P \subset \mathbb{R}^2$ is a convex polygon whose boundary $\partial P$ is of length $1$, and $S$ is a point on the boundary $\partial P$. The set $\mathcal{AP}$ is the quotient of $\mathcal{P}$ under the action of $SE(2)$.
	\end{definition}

	\begin{theorem} \label{thm:approx}
		There is a ``natural'' map $\mathcal{AP} \to \overline{\mathcal{AB}}$. More precisely, for every element of $\mathcal{AP}$ there is a $d_{AB}$-Cauchy sequence $\alpha_n$ of smooth billiard tables with positive curvature that tends in the $C^0$-norm to that element of $\mathcal{AP}$ (see the construction below). In addition, all such sequences are $d_{AB}$-equivalent.
	\end{theorem}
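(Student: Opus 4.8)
The plan is to make the construction explicit and then verify the two claims (Cauchy, and independence of choices up to $d_{AB}$-equivalence) using Theorem~\ref{thm:main} as the main tool, so that all estimates can be reduced to estimates on the geometric distance $d_B$.

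\textbf{The construction.} Given $(P,S) \in \mathcal{P}$, let $\delta\colon S^1 = \mathbb{R}/\mathbb{Z} \to \mathbb{R}^2$ be the arc-length parametrization of $\partial P$ with $\delta(0) = S$; this is a piecewise-linear closed curve, smooth except at finitely many corner parameters $t_1 < \dots < t_k$. I would smooth each corner on a scale $\epsilon_n \to 0$: replace the curve near each corner by a short circular arc (or a convolution with a fixed bump function rescaled to width $\epsilon_n$), chosen so that the result is $C^\infty$, strictly convex (positive curvature), has total boundary length still equal to $1$ (rescale by a factor $1 + O(\epsilon_n)$ if needed), and agrees with $\delta$ outside the $\epsilon_n$-neighborhoods of the corners. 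Call the resulting billiard table $\alpha_n$, with marked point the image of $S$. Because the modification is confined to arcs of total length $O(k\epsilon_n)$ on which the curve moves by $O(\epsilon_n)$, we get $\|\alpha_n - \delta\|_{C^0} = O(\epsilon_n) \to 0$.

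\textbf{Cauchy in $d_{AB}$.} Here is the key point: to estimate $d_{AB}(\alpha_n, \alpha_m)$ it suffices, by definition of $d_B$ and the fact that $d_{AB}$ is the descent of $d_B$, to exhibit one homotopy through smooth strictly convex unit-length tables from $\alpha_n$ to $\alpha_m$ of small $l_B$-length. For $m > n$ the two curves differ only on the corner-neighborhoods, where one is a width-$\epsilon_n$ smoothing and the other a width-$\epsilon_m$ smoothing; linearly interpolating the smoothing scale (and keeping length $1$) gives a homotopy $\gamma_s$ in which $\partial\gamma_s/\partial s$ is supported near the corners and has sup-norm $O(\epsilon_n)$, so $l_B(\gamma_s) = O(\epsilon_n)$. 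Hence $d_{AB}(\alpha_n,\alpha_m) = O(\epsilon_n)$, and choosing $\epsilon_n = 1/n$ (say) makes $(\alpha_n)$ Cauchy in $d_{AB}$. One must check that all the interpolating tables remain strictly convex and smooth — this is the only place where the shape of the smoothing profile matters, and it is arranged by taking, once and for all, a fixed convex smoothing model and only rescaling its width. By Theorem~\ref{thm:main} the corresponding billiard ball maps satisfy $d_H(\psi_{\alpha_n}, \psi_{\alpha_m}) \le 4\, d_{AB}(\alpha_n,\alpha_m) \to 0$, so the sequence defines a point of $\overline{\mathcal{AB}}$, and the assignment $(P,S)\mapsto [(\alpha_n)]$ descends to $\mathcal{AP}$ since the whole construction is $\SE(2)$-equivariant.

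\textbf{Independence of choices.} Suppose $(\alpha_n)$ and $(\alpha_n')$ are two sequences arising from (possibly different) admissible smoothing procedures applied to the same element of $\mathcal{AP}$; I must show $d_{AB}(\alpha_n, \alpha_n') \to 0$, which gives $d_H(\psi_{\alpha_n},\psi_{\alpha_n'}) \to 0$ by Theorem~\ref{thm:main} and hence $[(\alpha_n)] = [(\alpha_n')]$ in the completion. After moving $\alpha_n'$ by an element of $\SE(2)$ (allowed, since we work in $\mathcal{AB}$) we may assume both are smoothings of the same parametrized polygon $\delta$; then $\alpha_n$ and $\alpha_n'$ differ only on the corner-neighborhoods of scale $\lesssim \max(\epsilon_n,\epsilon_n')$, and the same interpolation-of-smoothings argument as above produces a homotopy of $l_B$-length $O(\max(\epsilon_n,\epsilon_n'))\to 0$. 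The main obstacle, and the step deserving genuine care, is precisely this convexity/smoothness bookkeeping during the homotopies: one has to choose the smoothing model so that every member of every interpolating family is a legitimate element of $\mathcal{B}$ (unit length, $C^\infty$, curvature $>0$), and confirm that the derivative bound $\max_{q}\|\partial\gamma_s/\partial s\| = O(\epsilon)$ survives the length-renormalization. All of this is routine but must be stated; once it is in place, Theorem~\ref{thm:main} does the rest and no symplectic input beyond it is needed.
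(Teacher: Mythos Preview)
Your approach---round the corners at scale $\epsilon$, renormalize to unit length, and interpolate the scale to build homotopies---is the same as the paper's, but the key estimate is mishandled. The claim that, after renormalizing to unit length, $\partial\gamma_s/\partial s$ is ``supported near the corners'' is false: elements of $\mathcal{B}$ are by definition arc-length parametrized, and rounding a corner shortens the curve, so the arc-length parameter of \emph{every} point past that corner shifts; after the further homothety by $1/L(s)$ the velocity is nonzero along the entire boundary, not just near the corners. What is actually true---and what the paper establishes by a careful computation, treating edge points and corner neighborhoods separately---is that $\max_q\|\partial\widetilde\gamma_\sigma/\partial\sigma\|$ is \emph{uniformly bounded} in the scale parameter $\sigma\in(0,1]$. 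Your $O(\epsilon_n)$ bound on $l_B$ then follows only because the interpolation from scale $\epsilon_n$ to scale $\epsilon_m$ traverses an interval of length $|\epsilon_n-\epsilon_m|\le\epsilon_n$ in $\sigma$, multiplied by that bounded velocity. But this uniform bound is the real content of the proof and is not ``routine'': it requires explicitly tracking how the arc-length shift $Q_B(s)$ at an edge point $B$, and the scaled parametrization near each corner, depend on $s$. The heuristic you wrote would not lead you there.

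Two secondary points. First, rounding corners while leaving the edges straight produces curves with zero curvature on the edges, hence not in $\mathcal{B}$; the paper handles this by enlarging to the space $\mathcal{C}$ of smooth (not necessarily strictly) convex marked bodies, noting that $\mathcal{AB}$ is $d_{AC}$-dense in $\mathcal{AC}$ so that $\overline{\mathcal{AC}}=\overline{\mathcal{AB}}$, and carrying out the whole construction in $\mathcal{C}$---this sidesteps the ``convexity/smoothness bookkeeping'' you flag. Second, Theorem~\ref{thm:main} plays no role in the present statement: the claim is entirely about $d_{AB}$-Cauchy sequences and $d_{AB}$-equivalence, and invoking $d_H$ neither helps nor is needed.
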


	In the remaining part of this section we prove this theorem.

	\medskip

	For technical purposes we introduce the set of (marked) smooth convex bodies that are not strictly convex.

	\begin{definition}
		Let $\mathcal{C}$ be the set of pairs $(K,S)$, where $K \subset \mathbb{R}^2$ is a smooth convex body (not necessarily strictly convex) whose boundary $\partial K$ is of length $1$, and $S$ is a point on the boundary $\partial K$.  We endow the set $\mathcal{C}$ with the metric $d_{C}$ defined in the same way as $d_{B}$, but instead of considering paths in $\mathcal{B}$ we consider paths in $\mathcal{C}$. Let $\mathcal{AC}$ be the quotient of $\mathcal{C}$ under the action of $SE(2)$ with the induced metric $d_{AC}$.
	\end{definition}

	Note that $\mathcal{AB} \subset \mathcal{AC}$. The reader can check that the restriction of $d_{AC}$ to $\mathcal{AB}$ coincides with $d_{AB}$ and $\mathcal{AB}$ is dense in $\mathcal{AC}$. Therefore, $\overline{\mathcal{AC}} = \overline{\mathcal{AB}}$.

	Now we are ready to describe the process of embedding $\mathcal{AP} \to \overline{\mathcal{AB}}$. Let $\eta \in \mathcal{P}$ and $K$ be the corresponding polygon. We assume that $S = \eta(0)$ is not a corner of the polygon (the case when $\eta(0)$ is a corner will be explained later). Enumerate the corners  of $K$ counterclockwise  and denote them by $D_1, \ldots, D_n$. Assume that $S \in [D_n,D_1]$. For each $D_i$ consider the orthonormal coordinate system with the origin at $D_i$ such that $\partial K$ locally looks like a graph of the function $a_i |x|$ for some $a_i > 0$. Take some  smooth convex function $f_i(x)$ such that $f_i(x)$ coincides with $a_i |x|$ outside a small neighborhood of $0$. Using this family of functions $(f_1, \ldots ,f_n)$ we can construct the corresponding smooth family of positively oriented convex Jordan curves $\gamma_s \colon S^1 = \mathbb{R}/\mathbb{Z} \to \mathbb{R}^2$ for $s \in (0,1]$ such that
	\begin{itemize}
		\item $\gamma_{s}$ coincides with the $\graph(s \cdot f_i(x/s))$ in a small neighborhood of $D_i$ (in the corresponding coordinate system)
		\item Outside all these neighborhoods $\gamma_{s}$ coincides with $\partial K$
		\item $\gamma_s(0) = S$
		\item $\left\|\partial\gamma_{s}/\partial q\right\| = \length(\gamma_{s})$.
	\end{itemize}

	\begin{figure}[ht]
		\begin{subfigure}{.42\textwidth}
  		\centering
  		\includegraphics[width=0.97\linewidth]{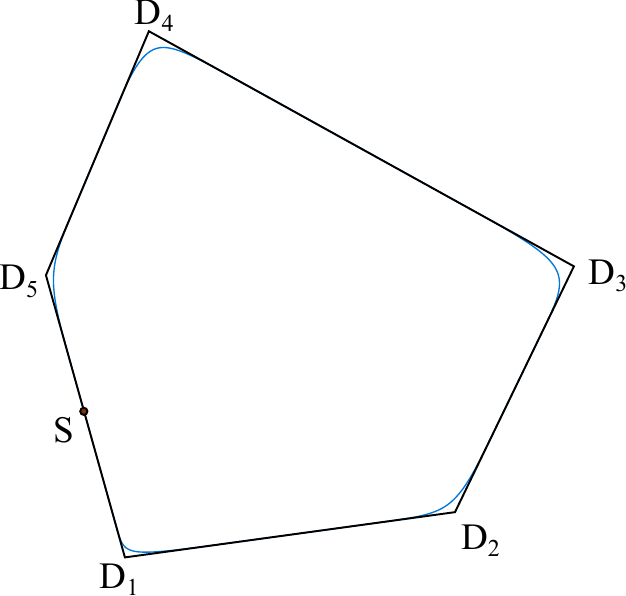}
		\end{subfigure}%
		\begin{subfigure}{.56\textwidth}
  		\centering
  		\includegraphics[width=0.97\linewidth]{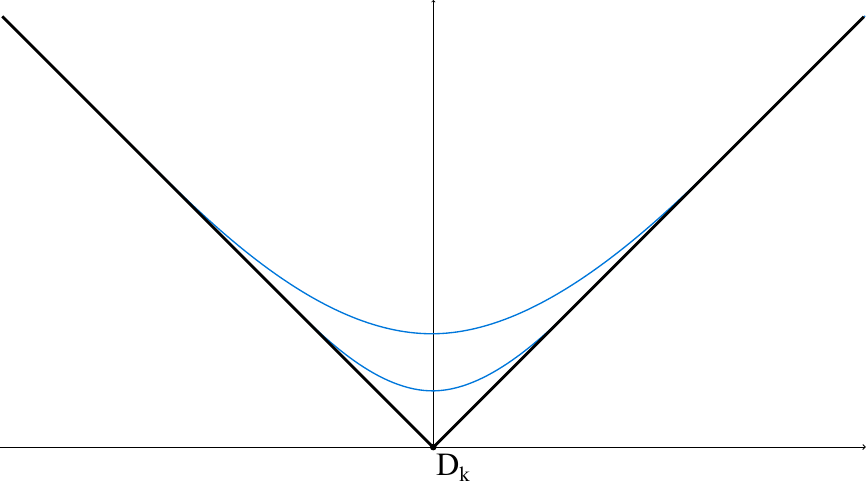}
		\end{subfigure}
		\caption{Approximation of a polygon.}
		\label{fig:approximation}
	\end{figure}

	Denote $L(s) = \length(\gamma_{s})$ and $\lambda(s) = 1/L(s)$. Take an arbitrary point $O$ in $\mathbb{R}^2$ and consider the family of curves $\widetilde{\gamma}_s = \lambda(s)\cdot\gamma_s$, where $O$ is the center of homothety; $\widetilde{\gamma}_{s}$ is a smooth path in $\mathcal{C}$ for $s \in (0,1]$.

	\begin{theorem} \label{thm:approx_es_ind}
		There exists a limit of $[\widetilde{\gamma}_s]$ in $\overline{\mathcal{AC}} = \overline{\mathcal{AB}}$ when $s \to 0$ and this limit is independent of the choice of the functions $f_1, \ldots, f_n$ and the origin point $O$.
	\end{theorem}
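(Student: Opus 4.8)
The plan is to prove existence of the limit first, then independence, using the comparison theorem (Theorem~\ref{thm:main}) — or rather its obvious analogue for $d_{AC}$ in place of $d_{AB}$, which holds since $d_{AC}$ restricts to $d_{AB}$ on $\mathcal{AB}$ and the latter is dense — to reduce everything to estimates on the geometric distance $d_C$. For existence, I would show that the curve $s \mapsto [\widetilde\gamma_s]$, $s \in (0,1]$, is Cauchy as $s \to 0$ in $(\mathcal{AC}, d_{AC})$; since $\overline{\mathcal{AC}} = \overline{\mathcal{AB}}$ is complete, the limit then exists. To establish the Cauchy property it suffices to bound $l_C(\widetilde\gamma_\bullet|_{[s_0, s_1]}) = \int_{s_0}^{s_1} \max_{q\in S^1} \|\partial_s \widetilde\gamma_s(q)\|\, ds$ and show this tends to $0$ as $s_0, s_1 \to 0$. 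Writing $\widetilde\gamma_s = \lambda(s)\gamma_s$, one has $\partial_s \widetilde\gamma_s = \lambda'(s)\gamma_s + \lambda(s)\partial_s\gamma_s$. The curve $\gamma_s$ differs from $\partial K$ only inside the shrinking corner neighborhoods, where it is the rescaled graph $\graph(s f_i(x/s))$; a direct computation shows $\partial_s\gamma_s$ is pointwise $O(1)$ there (the $s$-derivative of $s f_i(x/s)$ at fixed arclength parameter is bounded, since $f_i$ and $f_i'$ are bounded on the relevant compact set), and $L(s) = 1 + O(s)$ with $L'(s) = O(1)$, so $\lambda(s) = 1 + O(s)$ and $\lambda'(s) = O(1)$. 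Hence $\max_q\|\partial_s\widetilde\gamma_s(q)\|$ is bounded by a constant $C$ on $(0,1]$, giving $l_C(\widetilde\gamma_\bullet|_{[s_0,s_1]}) \le C|s_1 - s_0| \to 0$. (One must be slightly careful that the arclength reparametrization in the fourth bullet point does not introduce singular behavior; but since $\gamma_s \to \partial K$ uniformly with controlled speed, this is routine.)

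For independence of the choice of the $f_i$, fix two admissible families $(f_i)$ and $(g_i)$ producing paths $\widetilde\gamma_s$, $\widetilde\delta_s$ with the same origin $O$. I would estimate $d_{AC}(\widetilde\gamma_s, \widetilde\delta_s)$ directly for each small $s$: both curves agree with $\partial K$ outside the corner neighborhoods and inside the $i$-th neighborhood they are $\graph(s f_i(x/s))$ and $\graph(s g_i(x/s))$ respectively, two convex curves lying within a box of size $O(s)\times O(s)$ around $D_i$. Interpolating linearly (at fixed arclength parameter, then reparametrizing to keep constant speed) gives a path in $\mathcal{C}$ of $d_C$-length $O(s)$ — the pointwise displacement between the two graphs is $O(s)$ and the homotopy can be traversed in unit time. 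Therefore $d_{AC}(\widetilde\gamma_s,\widetilde\delta_s) = O(s) \to 0$, so the two paths have the same limit. The independence of $O$ is immediate: $\widetilde\gamma_s$ for a different center of homothety differs from the original by an element of $\SE(2)$ (a translation composed with the fixed rescaling), hence represents the same point of $\mathcal{AC}$.

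The main obstacle I anticipate is the bookkeeping around the arclength reparametrization imposed by the last bullet point: $\gamma_s$ is required to have constant speed $L(s)$, so as $s$ varies the parameter $q \in S^1$ assigned to a given geometric point on $\partial K$ drifts, and one needs to check this drift contributes only an $O(1)$ (in fact $O(s)$ away from the corners) term to $\partial_s\gamma_s$ in the $\max_q$ norm. This is true because the total arclength inside each corner neighborhood changes by $O(s)$ as a function of $s$, so the reparametrization is $C^1$-close to the identity with derivative bounds uniform in $s$; but making this precise is the one place where genuine care is needed rather than soft arguments. Everything else reduces, via the $d_C$-version of Theorem~\ref{thm:main} and the completeness of $\overline{\mathcal{AB}}$, to the elementary observation that the approximating curves move at bounded speed and are pinned together at scale $O(s)$ near the corners.
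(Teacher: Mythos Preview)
Your approach is essentially the paper's: show $\max_{q}\|\partial_s\gamma_s\|$ is uniformly bounded in $s$ (treating edge points and corner points separately, with the arclength-drift bookkeeping you flag being exactly the content of the paper's explicit computations), hence the path has finite $d_C$-length and is Cauchy; then for independence interpolate between the two corner-smoothings and show the resulting homotopy has $d_C$-length $O(s)$. The paper assumes WLOG $g_i\le f_i$ and interpolates monotonically through the defining functions $\psi_{i,t}$ rather than linearly through the curves, but this is a cosmetic difference.

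One correction: your invocation of Theorem~\ref{thm:main} (and of a ``$d_C$-version'' of it) is a red herring. The statement lives entirely on the geometric side --- you are proving convergence in $\overline{\mathcal{AC}}$, and the Hofer metric plays no role whatsoever in this proof. Drop those references; everything you actually need is the definition of $d_C$ as an infimum of path-lengths and the completeness of $\overline{\mathcal{AC}}$.
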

	\begin{proof}
		\emph{(a) Existence of the limit}. To prove that there exists a limit of $[\widetilde{\gamma}_s]$ in $\overline{\mathcal{AC}}$ when $s \to 0$ we will show that
		\begin{equation}\label{main_int:fin}
			\int_0^1 \max_{q \in S^1} \left\| \frac{\partial \widetilde{\gamma}_s}{\partial s}(q) \right\| ds < +\infty.
		\end{equation}
		To prove this, it is enough to demonstrate that
		\begin{equation}\label{second_int:fin}
			\int_0^1 \max_{q \in S^1} \left\| \frac{\partial \gamma_s}{\partial s}(q) \right\| ds < +\infty.
		\end{equation}
		Indeed, note that
		\begin{equation}\label{Leibniz:fin}
			 \frac{\partial \widetilde{\gamma}_s}{\partial s} = \frac{\partial \lambda}{\partial s} (s)\cdot \gamma_s + \lambda(s) \cdot\frac{\partial \gamma_s}{\partial s}.
		\end{equation}
		Since $\lambda(s)$ is monotone, we have
		\begin{equation}\label{Lambda:monotone}
			\int_0^1 \left|\frac{\partial \lambda}{\partial s}(s)\right| ds = \lambda(1) - \lim_{s \to 0} \lambda(s) \leq \lambda(1).
		\end{equation}
		
		Now~\eqref{main_int:fin} can be inferred from equations~\eqref{second_int:fin},~\eqref{Leibniz:fin},~\eqref{Lambda:monotone} and the fact that $|\lambda(s)|$ and $\|\gamma_s(q)\|$ are bounded. Therefore, we only have to show~\eqref{second_int:fin}.

		\begin{figure}[ht]
  		\centering
  		\includegraphics[width=0.5\textwidth]{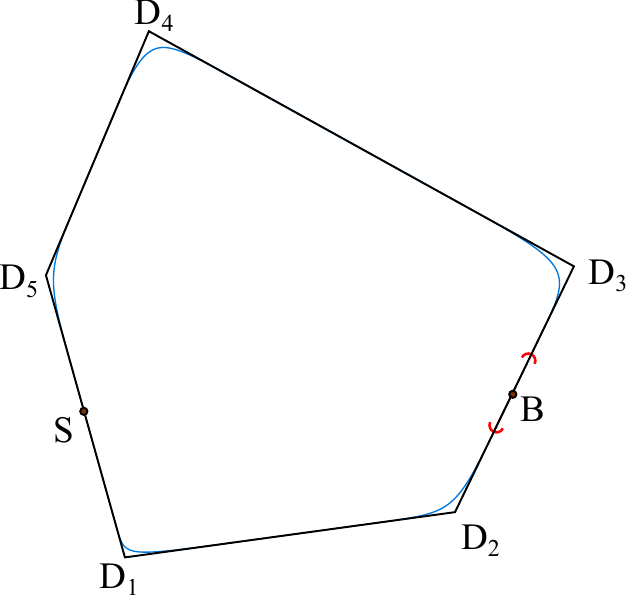}
			\caption{Local picture on an edge.}
			\label{fig:ponit_on_an_edge}
		\end{figure}

		Let $B$ be an interior point of some edge of $\partial K$. Let $(q_0,s_0)$ be a point such that $\gamma_{s_0}(q_0) = B$ and $\gamma_{s_0}$ coincides with $\partial K$ in a small neighborhood of $B$. We want to compute $\partial \gamma_s/\partial s$ at the point $(q_0,s_0)$. Denote by $\beta(q)$ the arc-length counterclockwise parametrization of $\partial K$ in some neighborhood of the point $B$ satisfying $\beta(0) = B$. In this case for $(q,s)$ close enough to $(q_0,s_0)$ one has
		\begin{equation}\label{gamma_on_an_edge}
		\gamma_s(q) = \beta((q-Q_B(s)) \cdot L(s)),
		\end{equation}
		 where $Q_B (s)$ is uniquely defined by the equation $\gamma_s(Q_B(s)) = B$. Indeed, for a fixed $s$ both curves have the same initial conditions $\gamma_s(Q_B(s)) = \beta (0) = B$ and the same speed $L(s)$.

		 We can describe $Q_B(s)$ in a slightly different way. Let $l_{SB}(s)$ be the distance between $S$ and $B$ along $\gamma_s$ in the counterclockwise direction, then
		 \begin{equation}\label{eq:Q_B}
		 Q_B(s) = \frac{l_{SB}(s)}{L(s)}
		 \end{equation}
		 For our computations we need the explicit forms of $l_{SB}(s)$ and $L(s)$. Let $\delta_i$ be the difference between the lengths of $\partial K$ and $\gamma_1$ near the corner $D_i$. Then
		\begin{equation}\label{eq:l_SB}
			l_{SB}(s) = l_{SB,\partial K} - s \cdot \sum_{i=1}^k \delta_i,
		\end{equation}
		and
		\begin{equation}\label{eq:Ls}
			L(s) = L_{\partial K} - s \cdot \sum_{i=1}^n \delta_i,
		\end{equation}
		where $L_{\partial K}$ is the length of $\partial K$ and $l_{SB,\partial K}$ is the distance between $S$ and $B$ along $\partial K$ in the counterclockwise direction and $k$ is the number of corners between $S$ and $B$ in the counterclockwise direction. From~\eqref{gamma_on_an_edge}
		\[
			\frac{\partial \gamma_s}{\partial s}(q) = \frac{\partial \beta}{\partial q} (\ldots) \cdot \left[-\frac{\partial Q_B}{\partial s}(s) \cdot L(s) + \frac{\partial L}{\partial s} (s) \cdot (q- Q_B(s))\right].
		\]
		Note that $\beta$ is arc-length parametrized and $Q_B(s_0) = q_0$, therefore,
		\begin{equation}\label{eq:partgamma}
			\left\| \frac{\partial \gamma_s}{\partial s} \right\|_{(q_0,s_0)} = \left|L(s_0) \cdot \frac{\partial Q_B}{\partial s}(s_0)  \right| =  \frac{\left|\frac{\partial l_{SB}}{\partial s}(s_0)\cdot L(s_0) - \frac{\partial L}{\partial s}(s_0)\cdot l_{SB}(s_0)\right|}{L(s_0)},
		\end{equation}
		where the last inequality follows from~\eqref{eq:Q_B}.

		Combining~\eqref{eq:l_SB}, \eqref{eq:Ls}, and~\eqref{eq:partgamma}, one can obtain
		\[
			 \left\| \frac{\partial \gamma_s}{\partial s} \right\|_{(q_0,s_0)} \leq \frac{\bigg|L(s_0) \cdot \sum_{i=1}^k \delta_i\bigg|+\bigg|l_{SB}(s_0)\cdot \sum_{i=1}^n \delta_i \bigg|}{L(s_0)} \leq \frac{2L_{\partial K} \cdot \sum_{i=1}^n \delta_i}{L_{\partial K} - \sum_{i=1}^n \delta_i}.
		\]
		Note that the right-hand side is independent of the choice of $(q_0,s_0)$ provided $\gamma_{s_0}(q) \in \partial K$ in a small neighborhood of $q_0$.

		\begin{figure}[ht]
  		\centering
  		\includegraphics[width=0.5\textwidth]{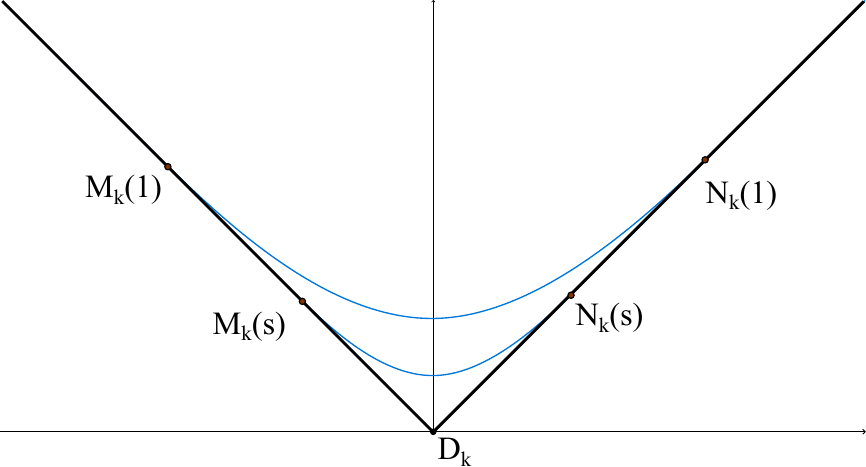}
			\caption{Local picture near a corner.}
			\label{fig:corner_computations}
		\end{figure}

		Our next goal is to analyze the behavior of $\partial \gamma_s/\partial s$ near any corner. Fix a corner $D_k$ and for every $s\in (0,1]$ consider the pair of points $M_{k} (s)$ and $N_{k} (s)$, where $M_{k}(s)$ is the closest point to $D_k$ on $[D_{k-1},D_k] \cap \gamma_{s}$ and $N_{k}(s)$ is the closest point to $D_k$ on $[D_{k},D_{k+1}] \cap \gamma_{s}$ (see Figure~\ref{fig:corner_computations}). Moreover, let $h_k$ be the length of $\gamma_1$ between $M_{k}(1)$ and $N_{k}(1)$. Let $\alpha (q)$ be the arc-length reparametrization of $\gamma_1$ such that $\alpha(0) = M_{k}(1)$, and consider a coordinate system where $D_k$ is the origin. Then locally we have
		\[
			\gamma_s (q) = s \cdot \alpha\left((q-Q_k(s))\cdot\frac{L(s)}{s} \right),
		\]
		where $Q_k(s)$ is uniquely defined by the equation $\gamma_s (Q_k (s)) = M_{k}(s)$. Therefore,
		\[
			Q_k (s) =  \frac{l_{SD_k,\partial K} - s\cdot \left(\sum_{i=1}^{k-1} \delta_i + \|M_{k}(1) - D_k\|\right)}{L_{\partial K} - s \cdot \sum_{i=1}^n \delta_i}.
		\]
		Our goal is to estimate $\left\|\frac{\partial \gamma_s}{\partial s} (q)\right\|$ for $q \in \left[Q_k(s), Q_k(s) + \frac{h_k \cdot s}{L(s)} \right]$. First of all, for such a $q$ holds
		\[
			\frac{\partial \gamma_s}{\partial s} (q) = \alpha(\ldots) + \frac{\partial \alpha}{\partial q} (\ldots) \cdot \left[s \cdot \frac{\partial}{\partial s} \left((q-Q_k(s))\cdot\frac{L(s)}{s} \right)\right].
		\]
		Then
		\[
			\left\|\frac{\partial \gamma_s}{\partial s} (q)\right\| \leq \max(\|M_{k}(1) - D_k\|, \|N_{k}(1) - D_k\|) + \left|s \cdot \frac{\partial}{\partial s} \left((q-Q_k(s))\cdot\frac{L(s)}{s} \right) \right|.
		\]
		Note that
		\[
			\max_{q \in \left[Q_k(s), Q_k(s) + \frac{h_k \cdot s}{L(s)} \right]}\left|s \cdot \frac{\partial}{\partial s} \left((q-Q_k(s))\cdot\frac{L(s)}{s} \right) \right| \leq L(s)\cdot \left|\frac{\partial Q_k}{\partial s} (s)\right| + \frac{h_k \cdot s^2}{L(s)} \cdot  \left|\frac{\partial}{\partial s} \frac{L(s)}{s} \right|.
		\]
		For the first summand holds
		\[
			L(s)\cdot \left|\frac{\partial Q_k}{\partial s} (s)\right| \leq \frac{2L_{\partial K} \cdot \left(\sum_{i=1}^n \delta_i + L_{\partial K}\right)}{L_{\partial K} - \sum_{i=1}^n \delta_i}.
		\]
		For the second summand holds
		\[
			\frac{h_k \cdot s^2}{L(s)} \cdot  \left|\frac{\partial}{\partial s} \frac{L(s)}{s} \right| \leq \frac{L_{\partial K}^2}{L_{\partial K} - \sum_{i=1}^n \delta_i}.
		\]
		Therefore, from the above estimates it follows that
		\[
			\max_{q \in S^1} \left\|\frac{\partial \gamma_s}{\partial s} (q)\right\| \leq C,
		\]
		where $C$ is independent of $s \in (0,1]$. This implies~\eqref{second_int:fin}.
		
		\emph{(b) Choice independence}. First of all, note that the approximation procedure is independent of the homothety center $O$, because if we scale a curve by the same factor with respect to different points, then the resulting curves differ by a shift and belong to the same class in $\mathcal{AC}$.
		
		Assume that $\gamma_{1,s}$ is generated by the family $(f_1, \ldots, f_n)$ and $\gamma_{0,s}$ is generated by the family $(g_1, \ldots, g_n)$. Without loss of generality we assume that $g_i \leq f_i$ for every $i$. Our goal is to prove that for the corresponding families of curves $\widetilde{\gamma}_{1,s}$ and $\widetilde{\gamma}_{0,s}$ holds
		\begin{equation}\label{eq:independ}
			d_{C}(\widetilde{\gamma}_{1,s},\widetilde{\gamma}_{0,s}) = O(s),
		\end{equation}
		when $s \to 0$.
		
		First of all, take smooth families $(\psi_{1,t},\ldots, \psi_{n,t})$ of convex functions for $t \in [0,1]$ such that $\psi_{i,0} = g_i$, $\psi_{i,1} = f_i$ and $\psi_{i,t}$ is monotone in $t$. Then for each $t \in [0,1]$ we can construct the corresponding smooth family $\gamma_{t,s} \colon S^1 \to \mathbb{R}^2$ for $s \in (0,1]$. Let $\delta_k(t)$ be the difference between the lengths of $\partial K$ and $\gamma_{t,1}$ near the corner $D_k$. Note that every $\delta_k(t)$ is smooth on $[0,1]$ and monotonically increasing in $t$. Let $L(t,s) = \length(\gamma_{t,s})$, then
		\[
			L(t,s) = L_{\partial K} - s\cdot\sum_{i=1}^n\delta_{i}(t).
		\]
		In order to show~\eqref{eq:independ}, it is enough to prove
		\begin{equation}\label{second_eq:independ}
			\int_{0}^1 \max_{q \in S^1} \left\|\frac{\partial \gamma_{t,s}}{\partial t}(q) \right\| dt = O(s).
		\end{equation}
	 Indeed, introduce $\lambda(t,s) = 1/L(t,s)$ and $\widetilde{\gamma}_{t,s} = \lambda(t,s) \cdot \gamma_{t,s}$, then
		\[
			d_{C}(\widetilde{\gamma}_{1,s},\widetilde{\gamma}_{0,s}) \leq \int_{0}^1 \max_{q \in S^1} \left\|\frac{\partial \widetilde{\gamma}_{t,s}}{\partial t}(q) \right\| dt \leq O(s) + O(1)\cdot |\lambda(1,s) - \lambda(0,s)| = O(s).
		\]
		
		Let $B$ be an interior point of some edge of $\partial K$. Let $(q_0,t_0,s_0)$ be a point such that $\gamma_{t_0,s_0}(q_0) = B$ and $\gamma_{t_0,s_0}$ coincides with $\partial K$ in a small neighborhood of $B$. Denote by $\beta(q)$ the arc-length counterclockwise parametrization of $\partial K$ in some neighborhood of the point $B$ satisfying $\beta(0) = B$. Then for $(q,t,s)$ close enough to $(q_0,t_0,s_0)$ we have
		\begin{equation}\label{eq:gamma_ts}
			\gamma_{t,s}(q) = \beta((q-Q_B(t,s)) \cdot L(t,s)),
		\end{equation}
		where $Q_B (t,s)$ is uniquely defined by the equation $\gamma_{t,s}(Q_B(t,s)) = B$. Note that $Q_B(t,s)$ can also be expressed in the form
		\begin{equation}\label{eq:Q_Bts}
			Q_B(t,s) = \frac{l_{SB,\partial K} - s \cdot \sum_{i=1}^k\delta_{i}(t)}{L_{\partial K} - s\cdot\sum_{i=1}^n\delta_{i}(t)},
		\end{equation}
		where $k$ is the number of corners between $S$ and $B$ in counterclockwise direction. From~\eqref{eq:gamma_ts}
		\[
			\frac{\partial \gamma_{t,s}}{\partial t} = \frac{\partial \beta}{\partial q}(\ldots) \cdot \left[ -\frac{\partial Q_B}{\partial t} (t,s) \cdot L(t,s) + \frac{\partial L}{\partial t}(t,s) \cdot (q- Q_B(t,s)) \right].
		\]
		Note that $\beta$ is arc-length parametrized and $Q_B(t_0,s_0) = q_0$, therefore,

		\begin{equation}\label{eq:partgammateq}
			\left\|\frac{\partial \gamma_{t,s}}{\partial t}\right\|_{(q_0,t_0,s_0)} = L(t_0,s_0) \cdot \left|\frac{\partial Q_B}{\partial t} (t_0,s_0)\right|.
		\end{equation}
		Let us rewrite formula~\eqref{eq:Q_Bts} in the form
		\[
				Q_B(t,s) = \frac{l_{SB,\partial K} - s \cdot f(t)}{L_{\partial K} - s\cdot g(t)},
		\]
		where $f(t) = \sum_{i=1}^k\delta_{i}(t)$ and $g(t) = \sum_{i=1}^n\delta_{i}(t)$. Then
		\[
				\frac{\partial Q_B}{\partial t}(t,s) = \frac{s \cdot g'(t)\cdot [l_{SB,\partial K} - s \cdot f(t)] - s\cdot f'(t)\cdot [L_{\partial K} - s\cdot g(t)]}{[L_{\partial K} - s\cdot g(t)]^2}.
		\]
		Therefore,
		\begin{equation}\label{eq:Q_Bts_ineq}
			\left|\frac{\partial Q_B}{\partial t}(t_0,s_0)\right| \leq \frac{|g'(t_0)\cdot L_{\partial K}| + |f'(t_0)\cdot L_{\partial K}|}{[L(t_0,s_0)]^2} \cdot s_0 \leq \frac{2 \left[\sum\limits_{i=1}^n \delta_{i}'(t_0) \right] \cdot L_{\partial K}}{[L(t_0,s_0)]^2} \cdot s_0.
		\end{equation}
		In the last inequality we used the fact that every function $\delta_i (t)$ is monotonically increasing. Thus, by formula~\eqref{eq:partgammateq} and formula~\eqref{eq:Q_Bts_ineq} one can obtain
		\[
			 \left\|\frac{\partial \gamma_{t,s}}{\partial t}\right\|_{(q_0,t_0,s_0)} \leq  \frac{2 \left[\sum\limits_{i=1}^n \delta_{i}'(t_0) \right] \cdot L_{\partial K}}{L(t_0,s_0)} \cdot s_0 \leq \frac{2 \left[\sum\limits_{i=1}^n \delta_{i}'(t_0) \right] \cdot L_{\partial K}}{L_{\partial K} - \sum\limits_{i=1}^n \delta_{i}(1)} \cdot s_0 \leq C \cdot s_0,
		\]
		for some constant $C$ independent of the choice of such a $(q_0,t_0,s_0)$ since every function $\delta_i$ is smooth on $[0,1]$.

		Fix a corner $D_k$ and for every $s \in (0,1]$ consider the pair of points $M_{k}(s)$ and $N_{k}(s)$ where $M_{k}(s)$ is the closest point to $D_k$ on $[D_{k-1},D_k] \cap \gamma_{1,s}$ and $N_{k}(s)$ is the closest point to $D_k$ on $[D_{k},D_{k+1}] \cap \gamma_{1,s}$. Moreover, let $h_k(t) \in C^\infty([0,1])$ be the length of $\gamma_{t,1}$ between $M_{k}(1)$ and $N_{k}(1)$. Let $\alpha_t (q)$ be the arc-length reparametrization of $\gamma_{t,1}$ such that $\alpha_t(0) = M_{k}(1)$. Consider the coordinate system where $D_k$ is the origin. Then locally holds
		\[
			\gamma_{t,s} (q) = s \cdot \alpha_t \left((q-Q_k(t,s))\cdot\frac{L(t,s)}{s} \right),
		\]
		where
		\[
			Q_k (t,s) = \frac{l_{SD_k,\partial K} - s\cdot \left(\sum_{i=1}^{k-1} \delta_i(t) + \|M_{k}(1) - D_k\|\right)}{L_{\partial K} - s \cdot \sum_{i=1}^n \delta_i(t)}.
		\]
		Our goal is to estimate $\left\|\frac{\partial \gamma_{t,s}}{\partial t} (q)\right\|$ for $q \in \left[Q_k(t,s), Q_k(t,s) + \frac{h_k(t) \cdot s}{L(t,s)} \right]$. First of all, for such a $q$ holds
		\[
			\frac{\partial \gamma_{t,s}}{\partial t} (q) = s \cdot \frac{\partial\alpha_t}{\partial t}(\ldots) + \frac{\partial \alpha_t}{\partial q} (\ldots) \cdot \frac{\partial}{\partial t} \left[(q-Q_k(t,s))\cdot L(t,s)\right].
		\]
		For the first summand holds
		\[
			s \cdot \left\|\frac{\partial\alpha_t}{\partial t}(\ldots)\right\| \leq s \cdot \max_{q \in [0,h_k(t)]} \left\|\frac{\partial\alpha_t}{\partial t}(q)\right\| \leq C\cdot s.
		\]
		For the second summand holds
		\begin{align*}
			&\left\|\frac{\partial \alpha_t}{\partial q} (\ldots) \cdot \frac{\partial}{\partial t} \left[(q-Q_k(t,s))\cdot L(t,s)\right]\right\| = \left|\frac{\partial}{\partial t} \left[(q-Q_k(t,s))\cdot L(t,s)\right]\right| \leq \\
			&\leq \frac{h_k(t) \cdot s}{L(t,s)} \cdot\left|\frac{\partial L}{\partial t}(t,s)\right| + L(t,s) \cdot \left|\frac{\partial Q_k}{\partial t}(t,s)\right| \leq C \cdot s.
		\end{align*}
		Thus,
		\[
			\max_{\substack{q \in S^1\\ t \in [0,1]}} \left\|\frac{\partial \gamma_{t,s}}{\partial t}(q)\right\| = O(s).
		\]
		This completes the proof of Theorem~\ref{thm:approx_es_ind}.
	\end{proof}

	Therefore, we have defined an element $\Psi([\eta(q)]) \in \overline{\mathcal{AC}}$ for every $[\eta(q)] \in \mathcal{AP}$ such that $\eta(0)$ is not a corner. If $\eta(0)$ is a corner, we can define $\Psi([\eta(q)])$ as $\lim_{\tau \to 0} \Psi([\eta(q + \tau)]) \in \overline{\mathcal{AC}}$. Thus, we have a map $\Psi \colon \mathcal{AP} \to \overline{\mathcal{AC}}$.  As the result we have an $S^1$-equivariant map $\mathcal{AP} \to \overline{\BM}$, which completes the proof of Theorem~\ref{thm:approx}.

	\section{Comparison of the metric \texorpdfstring{$d_{B}$}{TEXT} with other geometric distances}	\label{sec:comparison}

	In this section, we discuss two estimates for the geometric distance $d_B$. First, note that $d_B$ is greater than or equal to the standard $C^0$ distance.

	\begin{lemma}\label{lem:compare_bil_c0}
		Let $\alpha$ and $\beta$ be two billiard tables, then
		\[
  		d_B(\alpha,\beta) \geq \max_{q \in S^1} \|\alpha(q) - \beta(q) \|
		\]
	\end{lemma}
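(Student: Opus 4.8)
The plan is to compare, for each fixed boundary parameter $q$, the pointwise displacement $\|\alpha(q)-\beta(q)\|$ against the length of an arbitrary admissible homotopy, and then optimize. Fix a smooth homotopy $\gamma_s$ with $\gamma_s \in \mathcal{B}$ for all $s\in[0,1]$, $\gamma_0 = \alpha$, $\gamma_1 = \beta$. For a fixed $q \in S^1$, the curve $s \mapsto \gamma_s(q)$ is a smooth path in $\mathbb{R}^2$ from $\alpha(q)$ to $\beta(q)$, so by the fundamental theorem of calculus
\[
	\alpha(q) - \beta(q) = -\int_0^1 \frac{\partial \gamma_s}{\partial s}(q)\, ds,
\]
and hence, by the triangle inequality for vector-valued integrals,
\[
	\|\alpha(q) - \beta(q)\| \;\leq\; \int_0^1 \left\| \frac{\partial \gamma_s}{\partial s}(q) \right\| ds \;\leq\; \int_0^1 \max_{q' \in S^1} \left\| \frac{\partial \gamma_s}{\partial s}(q') \right\| ds \;=\; l_B(\gamma_s).
\]

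Since the right-hand side $l_B(\gamma_s)$ does not depend on $q$, we may take the maximum over $q \in S^1$ on the left-hand side to get $\max_{q \in S^1}\|\alpha(q)-\beta(q)\| \leq l_B(\gamma_s)$. As this holds for every admissible homotopy $\gamma_s$, taking the infimum over all such homotopies yields
\[
	\max_{q \in S^1}\|\alpha(q)-\beta(q)\| \;\leq\; \inf_{\gamma_s} l_B(\gamma_s) \;=\; d_B(\alpha,\beta),
\]
which is the claimed inequality.

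There is no real obstacle here: the argument is an application of the fundamental theorem of calculus and the integral triangle inequality, together with the order of the two optimizations (first take the maximum over $q$ on the left, which is legitimate because the bound is $q$-independent, and only then take the infimum over homotopies on the right). The only point worth a word of care is that $\gamma_s$ is assumed smooth jointly in $(s,q)$, so $s\mapsto\gamma_s(q)$ is genuinely a $C^1$ path and the above manipulations are justified; this is exactly the regularity built into the definition of $d_B$.
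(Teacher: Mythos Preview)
Your proof is correct and follows essentially the same approach as the paper: the paper's argument is the single chain $\int_0^1 \max_q \|\partial_s\gamma_s(q)\|\,ds \geq \max_q \int_0^1 \|\partial_s\gamma_s(q)\|\,ds \geq \max_q \|\alpha(q)-\beta(q)\|$, which is exactly your fundamental theorem of calculus plus triangle inequality step, written more tersely.
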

	\begin{proof}
		Consider a path $\gamma_s$ of billiard tables between $\alpha$ and $\beta$. Then
		\[
 			\int_{0}^1 \max_{q \in S^1} \left\|\frac{\partial \gamma_s}{\partial s} (q)\right\| ds \geq \max_{q \in S^1} \int_{0}^1 \left\|\frac{\partial \gamma_s}{\partial s} (q)\right\|ds \geq \max_{q \in S^1} \|\alpha(q) -\beta(q)\|.
		\]
\end{proof}

	\begin{remark}
		Let $\alpha,\beta \colon S^1 \to \mathbb{R}^2$ be billiard tables, and let $K(\alpha)$ and $K(\beta)$ denote the corresponding convex bodies. Let $d_{\text{Hausd}} (K(\alpha), K(\beta))$ be the Hausdorff distance between $K(\alpha)$ and $K(\beta)$. The following inequality
		\[
			d_{\text{Hausd}} (K(\alpha), K(\beta)) \leq \max_{q \in S^1} \|\alpha(q) - \beta(q) \|
		\]
		follows from the definition of the Hausdorff distance. Hence, 
		\[
		d_{\text{Hausd}} (K(\alpha), K(\beta)) \leq d_{B}(\alpha,\beta).
		\]
		Therefore, every function on billiard tables that is continuous with respect to the Hausdorff metric is continuous with respect to $d_{B}$ as well. For example, the sequence of $p$-widths of a billiard table, an interesting family of invariants defined in~\cite{chodosh2025p}, is continuous in both metrics\footnote{Note that the $p$-width is $1$-homogeneous, invariant under translations, and monotone with respect to inclusion~\cite[Lemma 2.4]{chodosh2025p}. These properties imply the continuity of the $p$-width with respect to the Hausdorff metric. The proof is similar to the argument in~\cite[proof of Theorem 1.8.20]{Schneider_2013}.}.
	\end{remark}


	Now, let $\alpha \colon S^1 \to \mathbb{R}^2$ be a billiard table and $n(q)$ be the normal outward vector field on $\alpha$. Let $f \colon S^1 \to \mathbb{R}$ be a smooth function. If $f$ is $C^2$-small enough, then every curve of the form
	\[
		\gamma_s(q) = \alpha (q) + s\cdot f(q)\cdot n(q)
	\]
	has positive curvature for every $s \in [0,1]$. In addition, assume that $\gamma_1$ has length one. Then after arc-length reparametrization, with the fixed starting point $\gamma_1(0)$, we obtain a billiard table~$\beta$. Our goal is to prove an inequality of the form
	\[
		d_{B} (\alpha,\beta) \leq C \cdot \left( \max_{q \in S^1} |f(q)| + \max_{q \in S^1}\left|\frac{\partial f}{\partial q} (q)\right| \right),
	\]
	where $C$ depends only on $\alpha$. Let $L(s)$ be the length of $\gamma_s(q)$, then	
	\[
  	L(s) = \int_0^1 \left\|\frac{\partial \gamma_s}{\partial q}\right\| dq = \int_0^1\left\|\frac{\partial \alpha}{\partial q} + s \frac{\partial f}{\partial q} n + sf\frac{\partial n}{\partial q}\right\|dq =\int_0^1 \sqrt{(1 + s f\cdot k)^2 + \left(s\cdot\partial f/\partial q\right)^2}dq,
	\]
	where $k$ is the curvature of $\alpha$. Now consider the following family
	\[
  	\widetilde{\gamma}_s(q) = \frac{\gamma_s(q)}{L(s)},
	\]
	then every curve in this family has length one. Consider an arc-length reparametrization $\hat{\gamma}_s$ of every curve $\widetilde{\gamma}_s$ such that $\hat{\gamma}_s(0) = \widetilde{\gamma}_s(0)$, for every $s$. Let us introduce the function
	\[
  	l(s,q) = \int_0^q \left\|\frac{\partial \widetilde{\gamma}_s}{\partial q}\right\|dq = \frac{\int_0^q \sqrt{(1 + s f\cdot k)^2 + \left(s\cdot\partial f/\partial q\right)^2}dq}{\int_0^1 \sqrt{(1 + s f\cdot k)^2 + \left(s\cdot\partial f/\partial q\right)^2}dq},
	\]
	then $\hat{\gamma}_s(l(s,q)) = \widetilde{\gamma}_s(q)$ and one can show that
	\[
		\max_{q \in S^1} \left\|\frac{\partial \hat{\gamma}_s}{\partial s}(q)\right\| \leq \max_{q \in S^1}\left\|\frac{\partial \widetilde{\gamma}_s}{\partial s}(q)\right\| + \max_{q \in S^1}\left|\frac{\partial l}{\partial s}(s,q)\right|.
	\]
	Using above equations one can show
	\[
		d_{B} (\alpha,\beta) \leq l_B(\hat{\gamma}_s) \leq C \cdot \left( \max_{q \in S^1} |f(q)| + \max_{q \in S^1}\left|\frac{\partial f}{\partial q} (q)\right| \right),
	\]
	where the constant $C$ and a $C^2$-neighbourhood (that $f$ belongs to) depend on $\alpha$, more precisely on its curvature $k$.

\section{Discussion and open problems}\label{sec:discussion}

Theorem~\ref{thm:approx} provides an embedding of the space $\mathcal{AP}$ of marked polygons (up to affine isometries) into the completion of the set $\BM$ of symplectic diffeomorphisms with respect to the Hofer metric.

\begin{question}
	Is the pullback of the Hofer metric to $\mathcal{A}\mathcal{P}$ given by semi-algebraic functions of the vertices? If not, does there exist an algorithm calculating this pull back with arbitrary precision?
\end{question}

Let $P$ be a marked polygon, and $\alpha_i$ be an approximating sequence from Theorem~\ref{thm:approx}. Consider the corresponding billiard ball maps $\psi_P$ (which is defined on a proper full-measure subset of the annulus) and $\psi_i := \psi_{\alpha_i}$.

Since $\psi_{i}$ is a Cauchy sequence with respect to the Hofer metric, the Lagrangian submanifolds $\graph(\psi_i) \subset A \times A$ form a Cauchy sequence with respect to the spectral metric (also so-called $\gamma$-metric) on Lagrangian submanifolds of $A \times A$ (cf. \cite{viterbo2022supports}). For the elements in the completion of this space there is a notion of $\gamma$-support, which is a subset of $A \times A$. 

\begin{question}
	How is the $\gamma$-support of the limit of $\graph(\psi_i) \subset A \times A$ in the $\gamma$-metric related to the graph of $\psi_P$?
\end{question}

\begin{question}\label{q-dyn}
	What is the relation between the dynamics of $\psi_P$ and the one of $\psi_i$ for $i \to \infty$?
\end{question}

Let us note that since $\psi_P$ is discontinuous, we cannot hope that $\psi_i$ converges to $\psi_P$ in the uniform norm. However,  the limit of $\graph(\psi_i)$ in the Hausdorff metric likely contains the graph of $\psi_P$.

A potential approach to Question \ref{q-dyn} could be as follows. It sounds likely
that one can associate a version of filtered Floer or generating function homology
with a billiard ball map $\psi$ or its iteration $\psi^k$, $k \in \mathbb{N}$,   cf. \cite{Cotton-Clay,Felshtyn}. This gives rise to
a persistence module along the lines of
\cite{PS, PSRZ}. Write $\Barc(\psi^k)$ for the corresponding barcode.
Since the sequence $\psi_i^k$ is Cauchy with respect to the Hofer distance,
the corresponding sequence of barcodes $\Barc(\psi_i^k)$ should be Cauchy with respect to
the bottleneck distance (cf. \cite{PS,PSRZ}).
Hence it defines an element, say $\mathcal{C}_k$, in the completion of the space of barcodes. Now we are ready to formulate a more precise version of Question \ref{q-dyn}.

\begin{question}
\label{q-bar}
Which dynamical properties of the billiard in $P$ (length of periodic orbits, if any, their multiplicities, etc.) can be read from the sequence of barcodes $\mathcal{C}_k$?
\end{question}

Development of the Floer theory becomes more natural if one relates the group $\Symp(A, \partial A)$ to
the group $\Symp_c(A,\omega)$ of symplectic diffeomorphisms supported in the interior of $A$.

\begin{question}
		Can we isometrically embed  $\Symp(A, \partial A)$
into the completion of  $\Symp_c(A,\omega)$ with respect to the corresponding Hofer metric?
	\end{question}

Finally, Theorem~\ref{thm:approx} likely extends to more general convex (not necessarily
strictly convex) piece-wise smooth curves other than polygons. That being said, the level of
generality in which the theorem holds is unclear to us.

\begin{question} Let $C$ be a continuous simple closed plane counterclockwise arc-length parametrized curve of length $1$ bounding
a convex set in $\mathbb{R}^2$. Does there exist a sequence of billiard tables $\alpha_i$ which is Cauchy in $d_B$-distance and which $C^0$-converges to $C$?
\end{question}

Let $\alpha$ be a billiard table. Fix $n$, and consider the {\it generating function} $F_{\alpha} \colon S^n \to \mathbb{R}$ defined as
\[
		F_{\alpha}(q_1,\ldots, q_n) = \sum_{i=1}^n \|\alpha(q_{i+1}) - \alpha(q_i)\|,
\]
where $q_{n+1} = q_1$. It is well known (for example, see~\cite{katok1995introduction}) that every closed billiard trajectory of period $n$ is a critical point of the function $F_{\alpha}$. 
Now, consider a billiard table $\beta$ which is $C^0$-close to $\alpha$.
Let $F_{\beta} \colon S^n \to \mathbb{R}$ be the corresponding generating function.  Then $F_{\beta}$ is $C^0$-close to $F_{\alpha}$. Indeed,
\begin{align*}
\sup |F_{\alpha}-F_{\beta}| &\leq \sum_{i=1}^n \bigg|\|\alpha(q_{i+1}) - \alpha(q_i)\| - \|\beta(q_{i+1}) - \beta(q_i)\|  \bigg| \leq\\
&\leq \sum_{i=1}^n \|(\alpha(q_{i+1}) - \alpha(q_i)) - (\beta(q_{i+1}) - \beta(q_i)) \| \leq \\
&\leq  \sum_{i=1}^n \bigg[  \|\alpha(q_{i+1}) - \beta(q_{i+1})\|+ \|\alpha(q_{i}) - \beta(q_i) \|\bigg] \leq 2n \cdot \sup \|\alpha - \beta\|.
\end{align*}
An important source of the Morse-theoretic information about $F_{\alpha}$
(including the actions of $n$-periodic orbits and their indices) is its barcode. By the stability theorem (see, e.g., \cite{PSRZ}), the barcodes of $F_\alpha$ and $F_\beta$ are close in the bottleneck distance. This consideration together with Lemma~\ref{lem:compare_bil_c0} does not leave much hope to obtain a new information about actions and indices of periodic orbits with the help of our technique.

In contrast to this, Corollary \ref{cor-dyn} above yields a dynamical phenomenon which apparently
cannot be obtained via generating functions, at least in a naive way. Let $y$ be an $n$-periodic point of the billiard ball map $\psi_{\alpha}$.  Consider any set $X \subseteq A$ with $e(X)>0$, such that $X$ and $\psi_\alpha^n(X)$ lie sufficiently close to $y$ with respect to the Euclidean metric on $A$. 
Let $\beta$ be another billiard table with $d_{B}(\alpha,\beta) < e(X)/4n$. By  Corollary \ref{cor-dyn}, there exists $x \in X$ with $\psi_\beta^n (x) \in \psi_\alpha^n(X)$. In other words, the trajectory of $x$ for time $n$ with respect to $\beta$ is ``almost" periodic:  the terminal point is close to the initial one. 

\begin{remark}
We conclude with a remark about the comparison of the two approaches. 
If we consider a billiard table $\alpha$ with a periodic orbit of period $n$ and action $E$, then from the stability theorem for barcodes one should be able to deduce 
that every sufficiently  $C^0$-close billiard table $\beta$ also has a periodic orbit of period $n$ and action close to $E$\footnote{It would be interesting to complete details of this argument}.
However, we do not get any insight on the location of this orbit. At the same time, the displacement energy argument shows that if we consider any sufficiently $d_B$-close billiard table $\beta$, then near this periodic point of $\psi_\alpha$ there exists an ``almost" periodic point of $\psi_\beta$.
\end{remark}

	\section{Appendix} \label{section:Appendix}
	\begin{lemma}[The Hamilton--Jacobi equation] \label{lem:Ham-Jac}
		Let $\widetilde{\psi}_s$ be a smooth family of symplectomorphisms of $\mathbb{R} \times (-1,1)$ and $\widetilde{F}_s \colon U \to \mathbb{R}$ be a smooth family of functions, where $U = \{(q,Q) \in \mathbb{R}^2 : q < Q < q+1\}$ and $s \in [0,1]$. Assume that
		\begin{align}
			\frac{\partial \widetilde{F}_s}{\partial q} (q, Q_s(q,p)) &= - p, \label{eq:gen_small}\\
			\frac{\partial \widetilde{F}_s}{\partial Q} (q, Q_s(q,p)) & = P_s(q,p), \label{eq:gen_big}
		\end{align}
		for every $(q,p) \in \mathbb{R} \times (-1,1)$ and $s \in [0,1]$, where
		\[
			(Q_s(q,p),P_s(q,p)) = \widetilde{\psi}_s(q,p).
		\]
		Then the family $\widetilde{\psi}_s$ is Hamiltonian (without any boundary conditions) with the Hamiltonian function
		\[
			\widetilde{H}_s(Q,P) = -\frac{\partial \widetilde{F}_s}{\partial s}(q_s(Q,P), Q),
		\]
		where
		\[
			(q_s(Q,P),p_s(Q,P)) = \widetilde{\psi}_s^{-1}(Q,P).
		\]
	\end{lemma}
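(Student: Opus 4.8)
The plan is to verify directly that the time-dependent vector field generating the family $\widetilde\psi_s$ is the Hamiltonian vector field of $\widetilde H_s$, using Liouville primitives of $\omega$ on the two copies of the annulus. Write $(Q_s(q,p),P_s(q,p)) = \widetilde\psi_s(q,p)$, and let $X_s$ be the vector field determined by $\partial_s\widetilde\psi_s = X_s\circ\widetilde\psi_s$; the goal is to show $i_{X_s}\omega = -d\widetilde H_s$. I would introduce the $1$-forms $\lambda = p\,dq$ on the source and $\lambda' = P\,dQ$ on the target, so that $d\lambda = d\lambda' = \omega$, and recall that $\widetilde\psi_s^*\omega = \omega$ since $\widetilde\psi_s$ is a symplectomorphism.

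The first step is to repackage the generating relations \eqref{eq:gen_small}--\eqref{eq:gen_big} as an identity of $1$-forms. Setting $G_s(q,p) := \widetilde F_s(q, Q_s(q,p))$, differentiating in $(q,p)$ at fixed $s$, and substituting $\partial_q\widetilde F_s = -p$ and $\partial_Q\widetilde F_s = P_s$, one obtains $dG_s = -p\,dq + P_s\,dQ_s = \widetilde\psi_s^*\lambda' - \lambda$; this is precisely the statement that $\widetilde F_s$ generates $\widetilde\psi_s$.

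Next I would differentiate this identity in $s$. On one side, $\partial_s(\widetilde\psi_s^*\lambda') = d(\partial_s G_s)$. On the other, the standard formula for the $s$-derivative of a pullback combined with Cartan's formula gives $\partial_s(\widetilde\psi_s^*\lambda') = \widetilde\psi_s^*(\mathcal L_{X_s}\lambda') = \widetilde\psi_s^*(i_{X_s}\omega) + d\big(\widetilde\psi_s^*(i_{X_s}\lambda')\big)$. Comparing, $\widetilde\psi_s^*(i_{X_s}\omega) = d\big(\partial_s G_s - \widetilde\psi_s^*(i_{X_s}\lambda')\big)$, which already shows the family is Hamiltonian (no boundary condition involved), with some Hamiltonian $H_s$ satisfying $H_s\circ\widetilde\psi_s = -\partial_s G_s + \widetilde\psi_s^*(i_{X_s}\lambda')$ up to an additive function of $s$ alone. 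It then remains to evaluate the two terms: the chain rule gives $\partial_s G_s = \partial_s\widetilde F_s(q,Q_s) + P_s\,\partial_s Q_s$ (using $\partial_Q\widetilde F_s = P_s$ again), while $i_{X_s}\lambda' = P\cdot dQ(X_s)$ and the $dQ$-component of $X_s$ at the point $\widetilde\psi_s(q,p)$ equals $\partial_s Q_s(q,p)$, so $\widetilde\psi_s^*(i_{X_s}\lambda') = P_s\,\partial_s Q_s$. The two contributions $P_s\,\partial_s Q_s$ cancel, leaving $H_s\circ\widetilde\psi_s = -\partial_s\widetilde F_s(q, Q_s(q,p))$. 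Since $q_s$ is the first component of $\widetilde\psi_s^{-1}$, the right-hand side equals $\widetilde H_s\circ\widetilde\psi_s$, so $H_s = \widetilde H_s + \mathrm{const}(s)$; as the constant is killed by $d$, we conclude $i_{X_s}\omega = -d\widetilde H_s$.

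The routine but delicate part is the bookkeeping: keeping straight which variables are frozen in each partial derivative, the sign conventions in $i_X\omega = -dH$ and in the generating relations, and the observation that $\widetilde\psi_s^*(i_{X_s}\lambda')$ only sees the $dQ$-direction. I do not expect a genuine conceptual obstacle, since the result is classical; I would simply be most careful at the two places where the generating relations are invoked and at the identification of $\widetilde\psi_s^*(i_{X_s}\lambda')$. An equivalent, more hands-on route would be to differentiate \eqref{eq:gen_small}--\eqref{eq:gen_big} directly in $s$, solve for $\partial_s Q_s$ and $\partial_s P_s$, and match these against Hamilton's equations for $\widetilde H_s$ obtained by applying the implicit function theorem to $\widetilde\psi_s$; the Liouville-form version above seems the cleanest to write out in full.
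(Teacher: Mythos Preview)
Your argument is correct and is a genuinely different route from the paper's. The paper proceeds exactly along what you call the ``more hands-on route'': it first invokes simple connectedness of $\mathbb{R}\times(-1,1)$ to know that \emph{some} Hamiltonian $\widetilde H_s$ exists, then differentiates the generating relations \eqref{eq:gen_small}--\eqref{eq:gen_big} in $s$, $q$, and $p$ and eliminates the mixed second partials of $\widetilde F_s$ to show that the functions $f_s(q,p)=-\partial_s\widetilde F_s(q,Q_s(q,p))$ and $g_s(q,p)=\widetilde H_s(Q_s,P_s)$ have equal $q$- and $p$-derivatives, hence differ by a function of $s$. Your approach packages the same content as the single identity $dG_s=\widetilde\psi_s^*\lambda'-\lambda$, then differentiates once in $s$ and lets Cartan's formula do the elimination: the cancellation of $P_s\,\partial_sQ_s$ between $\partial_sG_s$ and $\widetilde\psi_s^*(i_{X_s}\lambda')$ is precisely the algebraic cancellation the paper performs across several displayed equations. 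The Liouville-form version is cleaner and makes the structure of the computation transparent (and it generalizes immediately to higher dimension), while the paper's coordinate computation has the mild advantage of not requiring the reader to track the pullback formula $\partial_s(\widetilde\psi_s^*\lambda')=\widetilde\psi_s^*(\mathcal L_{X_s}\lambda')$. Both end with the same ``up to $\varphi(s)$'' caveat, which is harmless since only $d\widetilde H_s$ matters.
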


	\begin{proof}
		Since $\mathbb{R}\times(-1,1)$ is simply connected the family of symplectomorphisms $\widetilde{\psi}_s$ is Hamiltonian with some Hamiltonian function $\widetilde{H}_s(Q,P)$. Therefore
		\begin{align}
			\frac{\partial Q_s}{\partial s}(q,p) &= \frac{\partial \widetilde{H}_s}{\partial P}(Q_s(q,p),P_s(q,p)), \label{eq:ham_coordinate}\\
			\frac{\partial P_s}{\partial s}(q,p) &= -\frac{\partial \widetilde{H}_s}{\partial Q}(Q_s(q,p),P_s(q,p)). \label{eq:ham_momentum}
		\end{align}
		Differentiating equation~\eqref{eq:gen_small} by $s$ and equation~\eqref{eq:gen_big} by $q$ we have
		\begin{align*}
			\frac{\partial^2 \widetilde{F}_s}{\partial s\, \partial q} + \frac{\partial^2 \widetilde{F}_s}{\partial Q\, \partial q} \cdot \frac{\partial Q_s}{\partial s} &= 0,\\
			\frac{\partial^2 \widetilde{F}_s}{\partial q\, \partial Q} + \frac{\partial^2 \widetilde{F}_s}{\partial Q^2} \cdot \frac{\partial Q_s}{\partial q} &= \frac{\partial P_s}{\partial q}.
		\end{align*}
		Hence
		\[
			\frac{\partial^2 \widetilde{F}_s}{\partial s\, \partial q} = \frac{\partial Q_s}{\partial s} \cdot \left(\frac{\partial^2 \widetilde{F}_s}{\partial Q^2} \cdot \frac{\partial Q_s}{\partial q} -  \frac{\partial P_s}{\partial q}\right).
		\]
		Differentiating equation~\eqref{eq:gen_big} by $s$ we get
		\[
			\frac{\partial^2 \widetilde{F}_s}{\partial Q^2} \cdot \frac{\partial Q_s}{\partial s} = \frac{\partial P_s}{\partial s} - \frac{\partial^2 \widetilde{F}_s}{\partial s\, \partial Q},
		\]
		substituting this in the previous equation and using equations~\eqref{eq:ham_coordinate},~\eqref{eq:ham_momentum} we have
		\[
			- \left[\frac{\partial^2 \widetilde{F}_s}{\partial s\, \partial q} + \frac{\partial^2 \widetilde{F}_s}{\partial s\, \partial Q} \cdot \frac{\partial Q_s}{\partial q}\right] = \frac{\partial \widetilde{H}_s}{\partial P} \cdot \frac{\partial P_s}{\partial q} + \frac{\partial \widetilde{H}_s}{\partial Q} \cdot \frac{\partial Q_s}{\partial q}.
		\]
		Introducing functions
		\begin{align*}
			f_s(q,p) &= -\frac{\partial \widetilde{F}_s}{\partial s}(q, Q_s(q,p)),\\
			g_s(q,p) &= \widetilde{H}_s(Q_s(q,p),P_s(q,p)),
		\end{align*}
		we can rewrite equation above as
		\[
			\frac{\partial f_s}{\partial q} (q,p) = \frac{\partial g_s}{\partial q}(q,p).
		\]
		Differentiating equality~\eqref{eq:gen_big} by $s$ and multiplying it by $\partial Q_s/\partial p$ and differentiating the same equality by $p$ we have
		\begin{align*}
			\frac{\partial^2 \widetilde{F}_s}{\partial s\, \partial Q} \cdot \frac{\partial Q_s}{\partial p} + \frac{\partial^2 \widetilde{F}_s}{\partial Q^2} \cdot \frac{\partial Q_s}{\partial s} \cdot \frac{\partial Q_s}{\partial p} &= \frac{\partial P_s}{\partial s} \cdot \frac{\partial Q_s}{\partial p},\\
			\frac{\partial^2 \widetilde{F}_s}{\partial Q^2} \cdot \frac{\partial Q_s}{\partial p} &= \frac{\partial P_s}{\partial p}.
		\end{align*}
		Together with equations~\eqref{eq:ham_coordinate},~\eqref{eq:ham_momentum} we have
		\[
			- \frac{\partial^2 \widetilde{F}_s}{\partial s\, \partial Q} \cdot \frac{\partial Q_s}{\partial p} = \frac{\partial \widetilde{H}_s}{\partial P} \cdot \frac{\partial P_s}{\partial p} + \frac{\partial \widetilde{H}_s}{\partial Q} \cdot \frac{\partial Q_s}{\partial p},
		\]
		which is equivalent to
		\[
			\frac{\partial f_s}{\partial p} (q,p) = \frac{\partial g_s}{\partial p} (q,p).
		\]
		Thus,
		\[
			-\frac{\partial \widetilde{F}_s}{\partial s}(q_s(Q,P), Q) = \widetilde{H}_s(Q,P) + \varphi(s),
		\]
		for some smooth function $\varphi(s)$, but this function does not affect on the motion.
	\end{proof}

\bibliography{bibliography}
\bibliographystyle{abbrv}
	
\end{document}